 \theoremstyle{plain}
  \newtheorem{theorem}{Theorem}[section]
  \newtheorem{corollary}[theorem]{Corollary}
  \newtheorem{proposition}[theorem]{Proposition}
  \newtheorem{lemma}[theorem]{Lemma}  
  \newtheorem{definition}[theorem]{Definition}
  \newtheorem{conjecture}[theorem]{Conjecture}
  \newtheorem{remark}[theorem]{Remark}
\newcommand{\pr}{{\mathbb {P}}}
\newcommand{\re}{{\mathbb{R}}}
\newcommand{\vc}[1]{{\mathbf #1}}
\newcommand{\blah}[1]{}
\newcommand{\bern}{\mathcal{B}}
\newcommand{\binomi}[2]{{\rm Bin}(#1, #2)}
\newcommand{\wtu}{\widetilde{u}}
\newcommand{\ol}[1]{\overline{#1}} 
\newcommand{\LL}{{\mathcal{L}}}
\title{A proof of the Shepp--Olkin entropy concavity conjecture}
\author{Erwan Hillion\thanks{Department of Mathematics, University of Luxembourg, 
Campus Kirchberg, L-1359 Luxembourg, 
Grand-Duchy of Luxembourg. Email {\tt erwan.hillion@uni.lu}} \and Oliver Johnson\thanks{School of Mathematics,  University of Bristol, 
University Walk, 
Bristol BS8 1TW, 
 United Kingdom.  
Email {\tt o.johnson@bristol.ac.uk}}}
\date{\today}
\begin{document}

\maketitle 

\begin{abstract}
\noindent We prove the Shepp--Olkin  conjecture, which states that the entropy of the sum of independent
Bernoulli random variables is concave in the parameters of the individual random variables. Our proof is a refinement of
an argument previously presented by the same authors, which resolved the conjecture in the monotonic case (where all the 
parameters are simultaneously increasing). In fact, we show that the monotonic case is the worst case, using
 a careful analysis of concavity properties of the derivatives of the probability mass function. We propose a generalization
of Shepp and Olkin's original conjecture, to consider R\'{e}nyi and Tsallis entropies.
\end{abstract}

MSC2000: primary; 60E15, secondary; 94A17, 60D99

Keywords: entropy, transportation of measures, Bernoulli sums, Poisson binomial distribution, concavity

\section{Introduction}

This paper considers a conjecture of Shepp and Olkin \cite{shepp}, that the entropy of Bernoulli sums is a concave function
of the parameters. We write $\bern(p)$ for the law of a Bernoulli variable with parameter $p$.
Let $(p_1,\ldots, p_n) \in [0,1]^n$ be a $n$-tuple of parameters, and consider independent 
random variables $(X_1, \ldots, X_n)$ with $X_i \sim \bern(p_i)$. We set $S = \sum_{i=1}^n X_i$ and, for $k \in \{0,\ldots n\}$,
we write
 $f_k := \pr(S=k)$ for the probability mass function of $S$, defining a probability measure supported on $\{0,\ldots,n\}$. 
Note that $S$ is sometimes referred to as having a Poisson binomial distribution.

For each $k \in \{0,\ldots,n\}$, the probabilities $f_k$ can be seen as a smooth function of the $n$ parameters $\vc{p} := (p_1,\ldots,p_n)$. For instance we have $f_0 = (1-p_1) \cdots (1-p_n)$ and $f_n = p_1 \cdots p_n$.
 A particular case is obtained when the parameters $p_1=\cdots=p_n=p$ are all equal. In this case, $(f_k)$ describes the binomial measure $\binomi{n}{p}$.
This paper is focused on the study of the Shannon entropy $H$ of $(f_k)$.

\begin{definition}
Writing $U(x) := x \log(x)$ if $x>0$ and $U(0) := 0$, we define:
\begin{equation} \label{eq:shannon}
H(\vc{p}) := - \sum_{k=0}^n U(f_k) :=  - \sum_{k=0}^n f_k \log(f_k).
\end{equation}
\end{definition}

The entropy $H$ can itself be seen as a smooth function of the parameters $p_1,\ldots, p_n$. This article is devoted to the proof of the following:

\begin{theorem}[Shepp-Olkin Theorem] \label{th:SO}
For any $n \geq 1$, the function $\vc{p}  \mapsto H(\vc{p})$ is concave.
\end{theorem}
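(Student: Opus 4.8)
The plan is to reduce concavity of $H$ to a one-dimensional statement along line segments, to express the relevant second derivative in terms of the Poisson binomial mass function and its first two derivatives in the parameters, and then to reduce the general one-dimensional inequality to the monotone case already settled by the authors. Since $H$ is smooth on the convex body $[0,1]^n$, it is concave if and only if $\phi(t) := H(\vc{p}(t))$ satisfies $\phi''(t) \le 0$ for every segment $\vc{p}(t) = \vc{p} + t\vc{v}$ with $\vc{p}(t)$ in $[0,1]^n$ on the relevant $t$-range; by a density argument we may take $\vc{p}(t) \in (0,1)^n$, so that $f_k(t) > 0$ for all $k$. Writing $X_i(t) \sim \bern(p_i(t))$ for independent variables, $S(t) = \sum_i X_i(t)$, and $f_k(t) = \pr(S(t)=k)$, the identity $\sum_k f_k(t) \equiv 1$ gives $\sum_k \dot f_k = \sum_k \ddot f_k = 0$, and differentiating $\phi = -\sum_k U(f_k)$ twice yields
\[
 \phi''(t) \;=\; -\sum_{k=0}^{n}\frac{\dot f_k(t)^2}{f_k(t)} \;-\; \sum_{k=0}^{n}\ddot f_k(t)\,\log f_k(t),
\]
so the entire task is to prove that this expression is nonpositive for every admissible pair $(\vc{p},\vc{v})$, a dot denoting $d/dt$.

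The second step is to compute $\dot f_k$ and $\ddot f_k$ combinatorially, which is exactly the kind of input supplied by the authors' discrete transportation-of-measures formalism. Because $f_k$ is affine in each coordinate $p_i$ separately, one has $\partial f_k/\partial p_i = g^{(i)}_{k-1} - g^{(i)}_k$ and, for $i \ne j$, $\partial^2 f_k/\partial p_i\partial p_j = g^{(i,j)}_{k} - 2 g^{(i,j)}_{k-1} + g^{(i,j)}_{k-2}$, while $\partial^2 f_k/\partial p_i^2 = 0$; here $g^{(i)}$ and $g^{(i,j)}$ are the mass functions of the Bernoulli sums obtained by deleting $X_i$, respectively the pair $X_i, X_j$. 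Thus $\dot f_k$ is a discrete first difference and $\ddot f_k$ a discrete second difference of flux sequences assembled linearly (with coefficients $v_i$) and bilinearly (with coefficients $v_i v_j$) from the $g^{(i)}$ and $g^{(i,j)}$. Substituting these into the formula for $\phi''(t)$ and summing by parts in $k$ recasts $\phi''(t)$ as an explicit form that is bilinear in these fluxes and in the first and second differences of $k \mapsto \log f_k$. Since $f$, $g^{(i)}$ and $g^{(i,j)}$ are all Poisson binomial and hence ultra-log-concave, the quantitative strength of this discrete concavity is the resource one has to spend.

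The heart of the argument — and, I expect, the main obstacle — is to show that the monotone case is the worst one. Splitting the indices according to $\sgn(v_i)$ presents the perturbation as a superposition of an increasing part $\vc{v}^+$ and a decreasing part $\vc{v}^-$, and one then argues that the resulting mixed-sign value of $\phi''(t)$ is controlled by what monotone perturbations would produce, via a careful comparison of the flux and log-difference contributions that leans on the concavity properties of the derivatives of $f$ recorded above; this is precisely where the "careful analysis of concavity properties of the derivatives of the probability mass function" advertised in the abstract must do its work. The difficulty is that the curvature term $-\sum_k \ddot f_k \log f_k$ is not of one sign and so cannot be estimated term by term: it must be balanced against the manifestly nonpositive dispersion term $-\sum_k \dot f_k^2/f_k$, and doing so appears to require fairly sharp input — discrete convexity of $k \mapsto g^{(i,j)}_k$, tight control of the second differences of $k \mapsto \log f_k$ in terms of the $f_k$, and Cauchy--Schwarz-type bounds linking the fluxes to $\dot f_k^2/f_k$. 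Granting this reduction, $\phi''(t) \le 0$ follows from the monotone Shepp--Olkin theorem established previously by the authors, and the concavity of $H$ — hence Theorem~\ref{th:SO} — follows. I would therefore devote the bulk of the paper to isolating and proving exactly the family of mass-function inequalities this comparison requires.
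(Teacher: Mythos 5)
Your setup is sound and matches the paper's: the reduction to affine paths, the formula $\phi''(t) = -\sum_k \dot f_k^2/f_k - \sum_k \ddot f_k \log f_k$, and the expression of $\dot f_k$, $\ddot f_k$ as first and second discrete differences of flux sequences built from the deleted-variable mass functions are exactly the content of Proposition \ref{prop:SOuk} (Equations \eqref{eq:fder}, \eqref{eq:fder2}, \eqref{eq:drop}). But the heart of your argument is a reduction you do not carry out and which, as stated, would fail. Writing $\phi''$ as a quadratic form $Q(\vc{v},\vc{v})$ and splitting $\vc{v} = \vc{v}^+ + \vc{v}^-$ gives $Q(\vc{v}^+,\vc{v}^+) + 2Q(\vc{v}^+,\vc{v}^-) + Q(\vc{v}^-,\vc{v}^-)$; the monotone theorem controls the two diagonal terms, but nonpositivity of a quadratic form on two complementary orthants says nothing about the cross term, so $\phi''(t) \le 0$ does not ``follow from the monotone Shepp--Olkin theorem'' by any soft comparison. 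The sense in which the paper shows the monotone case is worst is far more local: in the decomposition of the key inequality (Lemma \ref{lem:NewCondition4Quadratic}) the cross-coefficient $c_{i,j}$ is shown to be nonpositive via the 2-fold log-concavity of Poisson binomial mass functions (Proposition \ref{prop:BernoulliInequality}, Equation \eqref{eq:2foldlog}), so each two-index quadratic in $(p_i',p_j')$ has negative discriminant and is nonnegative for \emph{all} sign patterns. That is, Condition 4 (Proposition \ref{prop:hupper}) is proved from scratch in full generality rather than imported from the monotone case.

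The second missing ingredient is that even granting Condition 4, the monotone proof cannot be reused verbatim: it relied on a sign condition $B_k \le 0$ (in the notation of \eqref{eq:abc}) which fails once the $g_k$ can change sign. The paper replaces this with a new functional inequality (Lemma \ref{lem:UtotalMonotone}) for $U(x)=x\log x$, exploiting $U''' \le 0$ and the log-convexity of $U''$, applied with $A_k,B_k,C_k$ satisfying $B_k^2 \le A_k C_k$ (a consequence of Proposition \ref{prop:hupper}) and $A_k, C_k \ge 0$ (Corollary \ref{cor:fgh}), so that each $u_k \ge 0$ pointwise in $k$. Your proposal correctly identifies that the curvature term must be balanced against the dispersion term and that sharp mass-function inequalities are needed, but it names neither the specific inequality (2-fold log-concavity) nor the convexity lemma that makes the balancing work, and the reduction-to-monotone strategy on which it rests is not viable as a formal argument.
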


We  simplify notation somewhat by considering the case where each $p_i := p_i(t)$ is an affine function of 
parameter $t \in [0,1]$, so that the derivative $p_i' := \frac{d}{dt} p_i(t)$ is constant in $t$. 
Theorem \ref{th:SO} will follow if we can show that the entropy is   concave in $t$.

Theorem \ref{th:SO}
 was conjectured by Shepp and Olkin \cite{shepp}  in 1981. In their original paper, Shepp and Olkin stated that the
conjecture is true in the cases $n=2$ and $n=3$, and proved that it holds for the binomial case where all $p_i$ are identical
(see also Mateev \cite{mateev}). Since then, progress has been limited. In \cite{johnsonc6}, Yu and Johnson considered
the thinning operation of R\'{e}nyi \cite{renyi4}, and proved a result which implies concavity of entropy when each $p_i(t)$
is proportional to $t$ or $1-t$. Further, Hillion \cite[Theorem 1.1]{hillion2}
proved Theorem \ref{th:SO} in the case where each $p_i(t)$ is either constant
or equal to $t$.

More significant progress was made  in \cite{johnson34} by the present authors, who proved that the entropy is concave when
 all $p_i'$ have the same sign. Perhaps surprisingly, in the current paper we show that this `monotone'
case resolved in \cite{johnson34}  is the most difficult.  

The strategy of \cite{johnson34} was to show that the entropy is a concave function along an interpolating (geodesic) 
path between discrete random variables, assuming
certain conditions are satisfied. In particular, \cite[Theorem 4.4]{johnson34} showed that if a `$k$-monotonicity' condition,
a generalized log-concavity condition, and so-called Condition 4 hold, then the entropy is concave. At the heart of this analysis
was control of terms of the form $U(1-x)$ by the second-order Taylor expansion of $U$. This relied on the fact that a term
we refer to as $B_k$ (see \eqref{eq:abc}) satisfies $B_k \leq 0$.

However,  unfortunately the inequality $B_k \leq 0$ does not hold in general, so in this paper we will use
Lemma \ref{lem:UtotalMonotone} to control terms of this kind.
 The strategy  is essentially to show that Condition 4 (here referred to as Proposition \ref{prop:hupper}) holds in the general Shepp-Olkin case.
Using this result, we  deduce Corollary \ref{cor:fgh} which plays the part of the $k$-monotonicity and generalized
log-concavity conditions. For the sake of simplicity we restrict our discussion to mass functions which are Bernoulli sums, but
 a version of Theorem \ref{th:SO}
will hold for interpolating paths made up of log-concave $f$ for which Proposition \ref{prop:hupper} holds.

The study of the entropy in the monotone case \cite{johnson34} was motivated by the theory of transportation of discrete probability measures
 and the introduction of a distance defined by a discrete form of the Benamou--Brenier formula
\cite{benamou2}  (see also \cite{gozlan}). This idea of a discrete geodesic was designed as an analogue to the more
developed theory of geodesics on continuous spaces such as Riemannian manifolds (see for example \cite{villani}), where concavity
of the entropy relates to the Ricci curvature of the underlying manifold (see \cite{carlen4,lott,sturm,sturm2}). Further, using
ideas related to the Bakry-\'{E}mery $\Gamma$-calculus \cite{bakry}, concavity of the entropy can be used to prove functional inequalities,
such as log-Sobolev or HWI (see for example \cite{ane,cordero3}). It remains an important problem to provide discrete
analogues of this theory.

The structure of the paper is as follows: in Section \ref{sec:technical} we state some technical results required for the proof of Theorem \ref{th:SO}, the proofs of which 
are deferred to the appendix. In Section \ref{sec:proof} we prove Theorem \ref{th:SO} itself. In Section \ref{sec:tsallis} we propose a generalized form of 
Shepp and Olkin's conjecture, in terms of  R\'{e}nyi and Tsallis  entropies $H_{R,q}$ and $H_{T,q}$. 

\section{Technical results required in the proof} \label{sec:technical}

We now state a number of technical results which are required in the proof of Theorem \ref{th:SO}, the main result of the paper.
The proofs are deferred to the appendix.

\subsection{Concavity of functions}
We first state a technical result concerning certain functions $U$:

\begin{lemma}\label{lem:UtotalMonotone}
Let $U: (0,\infty) \mapsto \re$ be a function such that (i) $U(1) = 0$, (ii) $U'(1) = 1$, (iii) $U'''(t) \leq 0$ for all $t$
 and (iv) $\log U''(t)$ is convex in $t$.

For $A,B,C,\alpha,\beta,\gamma$ satisfying $0<A<1, 0<C<1$ and $B^2 \leq AC$, $\beta^2 \leq \alpha \gamma$ we have:
\begin{equation} \label{eq:UtotalMonotone}
\alpha U(1-A)-2 \beta U(1-B)+\gamma U(1-C) \geq -\alpha A + 2 \beta B - \gamma C.
\end{equation}
\end{lemma}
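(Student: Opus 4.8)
The plan is to reduce the two-parameter family $(\alpha,\beta,\gamma)$ to the extremal case $\beta^2 = \alpha\gamma$ and then exploit the hypotheses on $U$ directly. Write $\Phi(A) := U(1-A) + A$, so that the claimed inequality \eqref{eq:UtotalMonotone} becomes $\alpha \Phi(A) - 2\beta \Phi(B) + \gamma \Phi(C) \geq 0$. Observe that $\Phi(0) = U(1) = 0$ by (i), that $\Phi'(A) = -U'(1-A) + 1$ so $\Phi'(0) = 1 - U'(1) = 0$ by (ii), and that $\Phi''(A) = U''(1-A) \geq 0$ (this follows since $U'''\le 0$ and, together with (iv), forces $U''>0$); moreover $\Phi'''(A) = -U'''(1-A) \geq 0$ by (iii). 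Thus $\Phi$ is a nonnegative, convex, increasing function on $(0,1)$ vanishing to second order at $0$, and $\Phi''$ is itself nondecreasing.

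First I would handle the degenerate boundary cases: if $\alpha = 0$ then $\beta = 0$ and the inequality reads $\gamma\Phi(C) \geq 0$, which holds since $\Phi \geq 0$ and $\gamma \geq 0$; symmetrically for $\gamma = 0$. So assume $\alpha,\gamma > 0$. Next, for fixed $A, B, C$ the left-hand side is linear in the vector $(\alpha,\beta,\gamma)$ subject to $\beta^2 \le \alpha\gamma$, $\alpha,\gamma\ge 0$; since $\Phi(B)\ge 0$, decreasing $\beta$ only helps when... in fact we want the \emph{worst} case, which is $\beta$ as large as possible, i.e. $\beta = \sqrt{\alpha\gamma}$. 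After rescaling (divide by $\sqrt{\alpha\gamma}$ and set $\lambda := \sqrt{\alpha/\gamma} > 0$) it suffices to prove, for all $\lambda > 0$ and all admissible $A,B,C$,
\begin{equation} \label{eq:reduced}
\lambda \Phi(A) + \lambda^{-1}\Phi(C) \geq 2\Phi(B), \qquad \text{whenever } B^2 \leq AC.
\end{equation}
By AM--GM the left side is at least $2\sqrt{\Phi(A)\Phi(C)}$, so \eqref{eq:reduced} reduces to the $\lambda$-free statement $\Phi(A)\Phi(C) \geq \Phi(B)^2$ whenever $B^2 \leq AC$; since $\Phi$ is increasing it is enough to prove this for $B^2 = AC$, i.e.
\begin{equation} \label{eq:logconcavePhi}
\Phi(A)\,\Phi(C) \;\geq\; \Phi\!\big(\sqrt{AC}\big)^2 \qquad\text{for all } A,C \in (0,1).
\end{equation}

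The crux, then, is \eqref{eq:logconcavePhi}: the function $s \mapsto \log\Phi(e^s)$ is convex on $(-\infty,0)$, equivalently $\Phi$ is log-convex as a function of $\log A$. This is where hypothesis (iv), convexity of $\log U''$, must be used — the weaker statement that $\Phi$ is merely convex is not enough. I would prove it by showing $g(s) := \log\Phi(e^s)$ has $g'' \geq 0$: computing, $g'' \geq 0$ is equivalent to $\Phi(A)\big(A^2\Phi''(A) + A\Phi'(A)\big) \geq \big(A\Phi'(A)\big)^2$, i.e. to
$$\Phi(A)\Phi''(A) + \tfrac{1}{A}\Phi(A)\Phi'(A) \geq \Phi'(A)^2.$$
Using the integral representations $\Phi'(A) = \int_0^A \Phi''(u)\,du$ and $\Phi(A) = \int_0^A (A-u)\Phi''(u)\,du$ together with $\Phi''(1-u) = U''(u)$ and the log-convexity of $U''$ (which makes $\Phi''$ log-convex near the relevant range and in particular satisfies a Cauchy--Schwarz-type inequality $\Phi''(u)\Phi''(v) \ge \Phi''(\tfrac{u+v}{2})^2$ after a change of variables), one can bound $\Phi'(A)^2$ against the left-hand side. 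The main obstacle I anticipate is precisely this last computation: turning ``$\log U''$ convex'' into the pointwise differential inequality for $g''$, since it mixes $\Phi,\Phi',\Phi''$ at a single point and a naive Cauchy--Schwarz on $\Phi'(A)^2 = \big(\int_0^A \Phi''\big)^2$ gives $\Phi'(A)^2 \le A\int_0^A\Phi''(u)^2\,du$, which is not obviously dominated by the target; getting the sharp constant will require carefully exploiting that $\Phi''$ is log-convex (not just that it is, say, monotone), likely via Chebyshev's sum/integral inequality applied to the pair of monotone rearrangements coming from $\Phi''$ and its reflection.
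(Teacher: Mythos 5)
Your reduction is correct and, in fact, sharp: writing $\Phi(x):=U(1-x)+x$, optimising first over $\beta=\sqrt{\alpha\gamma}$ and then over $\lambda=\sqrt{\alpha/\gamma}$ shows that the lemma is \emph{equivalent} (for $B\ge 0$) to the single inequality $\Phi(A)\Phi(C)\ge\Phi(\sqrt{AC})^{2}$, i.e.\ to convexity of $s\mapsto\log\Phi(e^{s})$. But that inequality is precisely where you stop. You propose integral representations of $\Phi,\Phi',\Phi''$ and a Chebyshev or Cauchy--Schwarz argument, you observe yourself that the naive bound $\Phi'(A)^{2}\le A\int_{0}^{A}\Phi''(u)^{2}\,du$ does not close the argument, and you leave the ``sharp constant'' unresolved. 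Since your reduction concentrates the entire content of the lemma into this one unproven statement, this is a genuine gap, not a detail to be filled in routinely. (A smaller unaddressed point: nothing in the hypotheses forces $B\ge 0$, and your step ``since $\Phi$ is increasing it suffices to take $B=\sqrt{AC}$'' uses monotonicity of $\Phi$ only on $[0,1)$; for $B<0$ one needs the extra observation $\Phi(-x)\le\Phi(x)$, which follows from $U''$ being non-increasing.)

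The paper's proof avoids your obstacle with a different device. It sets $\xi(t):=\alpha U(1-tA)-2\beta U(1-tB)+\gamma U(1-tC)$, notes that \eqref{eq:UtotalMonotone} reads $\xi(1)-\xi(0)\ge\xi'(0)$, and by the mean value theorem reduces this to convexity of $\xi$ on $[0,1]$. The gain is that $\xi''(t)$ involves only the single function $U''$ evaluated at three points, so its positivity follows from the arithmetic--geometric mean inequality together with $\beta^{2}\le\alpha\gamma$ and $B^{2}\le AC$, then from monotonicity of $U''$ (hypothesis (iii)) to replace $U''(1-tB)$ by $U''(1-t(A+C)/2)$, and finally from the midpoint form of hypothesis (iv), namely $\sqrt{U''(1-tA)\,U''(1-tC)}\ge U''(1-t(A+C)/2)$. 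In other words, the mean value theorem converts the inequality about the twice-integrated quantity $\Phi$ into a pointwise three-point inequality for $U''$, where the log-convexity hypothesis can be applied directly; your route instead requires transferring log-convexity from $\Phi''=U''(1-\cdot)$ up to $\Phi$ itself in the multiplicative variable, and that transfer is exactly the hard step you have not supplied.
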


Note that the conditions of this lemma are satisfied for $U(x) = x \log x$.

\subsection{Cubic inequality for Bernoulli sums}

Let $S=X_1+\ldots +X_m$ be the sum of independent Bernoulli variables of parameters $p_1,\ldots, p_m$.
\begin{proposition} \label{prop:BernoulliInequality}
For any $m$ and $k$, the following inequality holds:
\begin{equation} \label{eq:2foldlog}
\pi_{k-2} \pi_{k+1}^2 + \pi_{k}^3 + \pi_{k-1}^2 \pi_{k+2} \geq \pi_{k-2} \pi_{k} \pi_{k+2} + 2 \pi_{k-1} \pi_{k} \pi_{k+1}.
\end{equation}
\end{proposition}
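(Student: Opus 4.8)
The plan is to recognise \eqref{eq:2foldlog} as the non‑positivity of a $3\times 3$ Hankel determinant and then to read this off from the total positivity of Bernoulli sums. Adopting the convention $\pi_j:=0$ for $j<0$ and $j>m$, set
\[
M \;:=\; \begin{pmatrix} \pi_{k-2} & \pi_{k-1} & \pi_{k} \\ \pi_{k-1} & \pi_{k} & \pi_{k+1} \\ \pi_{k} & \pi_{k+1} & \pi_{k+2} \end{pmatrix}.
\]
Expanding $\det M$ along its top row, one finds that it equals exactly the right‑hand side minus the left‑hand side of \eqref{eq:2foldlog}, so it is enough to prove $\det M\le 0$ for every $k$ (and every $m$).

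First I would record the structural input: $\sum_k \pi_k x^k=\prod_{i=1}^m(1-p_i+p_i x)$ has non‑negative coefficients and only real, non‑positive zeros, so $(\pi_k)$ is a P\'olya frequency sequence; equivalently, by the Aissen--Schoenberg--Whitney theorem, the Toeplitz matrix $(\pi_{i-j})_{i,j\in\Z}$ is totally non‑negative. Since the $(i,j)$ entry $\pi_{i+j}$ of the Hankel matrix $H:=(\pi_{i+j})$ is the $(i,-j)$ entry of that Toeplitz matrix, any $3\times 3$ minor of $H$ equals, after reversing the order of its three columns, $(-1)$ times a $3\times 3$ minor of the Toeplitz matrix; hence every $3\times 3$ minor of $H$ is $\le 0$, and in particular $\det M\le 0$.

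To keep the proof self‑contained I would instead establish, by induction on $m$, the slightly stronger statement that \emph{every} $3\times 3$ minor of $H=H^{(m)}$ is non‑positive. For $m\le 1$ the matrix $H$ has rank at most $2$ and all its $3\times 3$ minors vanish. For the inductive step, write $S=S'+X_{m+1}$ with $S'=X_1+\dots+X_m$ and $X_{m+1}\sim\bern(p)$, so that the mass function of $S$ satisfies $\pi^{(m+1)}_k=(1-p)\pi^{(m)}_k+p\,\pi^{(m)}_{k-1}$; thus each row of a $3\times 3$ submatrix of $H^{(m+1)}$ is $(1-p)$ times a row of $H^{(m)}$ plus $p$ times the preceding row of $H^{(m)}$. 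Expanding the $3\times 3$ determinant multilinearly in its rows produces eight terms, each of the form $(1-p)^{3-s}p^{s}$ times the determinant of a $3\times 3$ submatrix of $H^{(m)}$ whose three row indices are obtained from the original ones by decreasing some of them. Such a submatrix either has two equal rows, and contributes $0$, or has strictly increasing row indices, and is then a genuine $3\times 3$ minor of $H^{(m)}$, hence $\le 0$ by the inductive hypothesis; since the coefficients $(1-p)^{3-s}p^{s}$ are non‑negative, the whole determinant is $\le 0$.

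The step I expect to be the real obstacle is exactly this strengthening of the inductive hypothesis: the contiguous window $M$ by itself is not stable under the multilinear expansion, which throws up non‑contiguous $3\times 3$ Hankel minors, so one is forced to carry along all $3\times 3$ minors simultaneously. Beyond that it only remains to keep straight which of the eight expansion terms degenerate (those in which two row indices collide), and to note that the cases where $k$ lies near $0$ or $m$, and where some $p_i\in\{0,1\}$, need no separate treatment.
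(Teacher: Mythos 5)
Your identification of \eqref{eq:2foldlog} with the non-positivity of the $3\times 3$ Hankel determinant $\det(\pi_{k-2+i+j})_{i,j=0}^{2}$ is correct, and your route is genuinely different from the paper's. The paper imports the two cubic inequalities \eqref{eq:c1} and \eqref{eq:c1bar} from \cite{johnson34} (themselves proved there by induction on the number of summands), multiplies them together, and reads off the positivity of $\pi_{k-1}\pi_k\pi_{k+1}$ times the difference of the two sides of \eqref{eq:2foldlog}. You bypass those auxiliary inequalities entirely, prove the stronger statement that \emph{every} $3\times 3$ Hankel minor is non-positive, and make explicit the total-positivity mechanism behind Br\"{a}nd\'{e}n's theorem \cite{branden}, which the paper only cites in passing. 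Your first argument (real-rootedness of $\prod_i(1-p_i+p_i x)$, the Aissen--Schoenberg--Whitney theorem, and the observation that a $3\times 3$ Hankel minor is $(-1)$ times a Toeplitz minor after reversing the three columns) is complete and correct as stated, and your remark that the inductive step forces you to carry all $3\times 3$ minors rather than just the contiguous window is exactly the right structural point.

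The one genuine error is the base case of your self-contained induction. To cover $k\le 1$ (where the window $M$ contains entries $\pi_j$ with $j<0$) you must, as your closing remark implicitly assumes, index the Hankel matrix over all of $\Z$; but for that matrix the claim ``rank at most $2$, all $3\times 3$ minors vanish'' is false. For $m=0$ one has $H_{i,j}=\mathbf{1}_{\{i+j=0\}}$, the anti-identity, which has infinite rank and contiguous $3\times 3$ minors equal to $-1$; for $m=1$, rows $0,1,2$ against columns $-2,-1,0$ give determinant $-\pi_0^3$. The conclusion you need, namely that these minors are $\le 0$, is still true: for $m\le 1$ at most one permutation contributes to any $3\times 3$ minor and it is order-reversing, so each minor is $0$ or minus a product of non-negative entries; but this is a different (if short) argument from the one you wrote. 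The cleanest repair is to run your induction on the Toeplitz side, where the base case is the easy total non-negativity of the two-band lower-triangular matrix of $(1-p,p)$, and pass to Hankel minors by column reversal only at the end; alternatively, restrict to $i,j\ge 0$ so that the rank argument is valid, at the cost of treating $k\in\{0,1\}$ separately (the case $k=1$, i.e.\ $\pi_1^3+\pi_0^2\pi_3\ge 2\pi_0\pi_1\pi_2$, is not trivial), contrary to your assertion that the boundary cases need no special attention.
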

Note that for $k$ outside the support of $S$, each term is equal to zero, so the inequality is trivially true.
In the appendix, we show that
 Proposition \ref{prop:BernoulliInequality} can be proved directly, using 
two other cubic inequalities \eqref{eq:c1} and \eqref{eq:c1bar} taken from \cite{johnson34}.

If we write $D_k := \LL(\pi_k) = \pi_k^2 - \pi_{k-1} \pi_{k+1} $, then it is well known (see Proposition \ref{prop:logConc}) that
$D_k \geq 0$, a result referred to as log-concavity of $\pi_k$. Observe that \eqref{eq:2foldlog} is equivalent to the statement
that $\LL^2(\pi_k) = \LL(D_{k}) = D_{k}^2 - D_{k-1} D_{k+1} \geq 0$, that the $D_k$ themselves are log-concave,
a property is referred to as 2-fold log-concavity.

This result also follows from a result of Br\"{a}nd\'{e}n \cite{branden}, which resolved a conjecture made
independently by  Stanley, by  McNamara and Sagan and by Fisk (see Br\"{a}nd\'{e}n's paper \cite{branden} for details).
   Br\"{a}nd\'{e}n discusses conditions under which infinite log-concavity (the fact that the iterated
$\LL^r(\pi_k) \geq 0$ for all $r \geq 1$) holds. 

\subsection{Upper bounding $h_k$}

Recall that we consider the random variable $S = \sum_{l=1}^n X_l$ with probability mass function $f_k(t)$.
For $i \in \{1,\ldots,n \}$ we define $S^{(i)} := \sum_{l \neq i} X_l$,
with probability mass function $f^{(i)}_k$ supported on $\{ 0, \ldots, n-1 \}$.
Similarly, for a pair of indices $i \neq j \in \{1,\ldots, n\}$ we define $S^{(i,j)} := \sum_{l \notin \{i,j\}} X_l$ with mass
function $f^{(i,j)}_k$ supported on $\{ 0, \ldots, n-2 \}$.

\begin{definition} As in \cite{johnson34}, we make the following definitions:
\begin{eqnarray} \label{eq:deffgh}
g_k &:=& \sum_{i} p_i' f^{(i)}_k := \sum_{i} p_i' \pr(S^{(i)} =k), \\
h_k &:=& \sum_{i \neq j} p_i' p_j' f^{(i,j)}_k := \sum_{i \neq j} p_i' p_j' \pr(S^{(i,j)}=k).   \label{eq:deffgh2}
\end{eqnarray}
\end{definition}
We prove a strong upper bound on $h_k$, which lies at the heart of the proof of Theorem \ref{th:SO}:
\begin{proposition} \label{prop:hupper}
For $k= 0, \ldots, n-2$:
\begin{equation} \label{eq:NewCondition4}
h_k (f_{k+1}^2-f_k f_{k+2}) \leq 2 g_k g_{k+1} f_{k+1} - g_k^2 f_{k+2} - g_{k+1}^2 f_k.
\end{equation}
\end{proposition}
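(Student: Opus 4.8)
The plan is to expand both sides of \eqref{eq:NewCondition4} as sums over the indices $i,j$ (and $i,j,k,l$ for the quadratic terms), and reduce the inequality to something that holds index-block by index-block. First I would note that $f_k$, $g_k$ and $h_k$ are all built from the mass functions $f^{(i)}_k$, $f^{(i,j)}_k$, and it is convenient to condition on the "other" variables: writing $f^{(i,j)}_k = \pi_k$ for a fixed pair $i\neq j$, one has $f^{(i)}_k = (1-p_j)\pi_k + p_j \pi_{k-1}$ and $f_k = (1-p_i)(1-p_j)\pi_k + (p_i+p_j - 2p_ip_j)\pi_{k-1} + p_ip_j\pi_{k-2}$, so every quantity in \eqref{eq:NewCondition4} becomes a bilinear or trilinear form in the $\pi$'s associated with the various sub-sums. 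The right-hand side $2g_kg_{k+1}f_{k+1} - g_k^2 f_{k+2} - g_{k+1}^2 f_k$ has the shape of a discriminant: it is $-\det$ of the $2\times2$ matrix $\begin{pmatrix} f_k & g_k \\ g_{k+1} & f_{k+1}\end{pmatrix}$-type object, or more precisely it equals $-(g_k^2 f_{k+2} - 2 g_k g_{k+1} f_{k+1} + g_{k+1}^2 f_k)$, a quadratic form in $(g_k, g_{k+1})$ with "matrix" entries $f_k, f_{k+1}, f_{k+2}$. So the natural move is to recognise the RHS as (minus) a Cauchy–Schwarz-type defect and try to dominate the LHS by it.

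The key structural step I would carry out is to use Proposition \ref{prop:BernoulliInequality} (2-fold log-concavity, $\LL^2 \pi_k \geq 0$) together with ordinary log-concavity $D_k = \pi_k^2 - \pi_{k-1}\pi_{k+1} \geq 0$ to control the cross terms. Concretely, after substitution the inequality should collapse, for each pair $i \neq j$ and each choice of a third contributing index (or pair of indices), to a cubic inequality in $\pi_{k-2},\ldots,\pi_{k+2}$ with coefficients that are products of the $p_l$ and $p_l'$. My expectation is that these reduce either to the log-concavity inequality $D_k \geq 0$, to the cubic \eqref{eq:2foldlog}, or to the two auxiliary cubics \eqref{eq:c1} and \eqref{eq:c1bar} imported from \cite{johnson34}. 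The coefficients $p_i' p_j'$ appearing in $h_k$ and the $p_i'$ in $g_k$ must be matched up so that the diagonal terms ($i=j$ blocks in the expansion of $g_k g_{k+1}$ versus the $h_k$ contribution) exactly cancel or combine into a manifestly nonnegative expression; this bookkeeping, keeping track of which index is "removed" and which is "averaged over," is where the argument is most delicate.

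The main obstacle, I expect, is precisely this combinatorial matching: $h_k$ removes \emph{two} indices while $g_k$ removes only one, so to compare $g_k g_{k+1}$ (which removes index $i$ from one factor and index $j$ from the other, but keeps both present as Bernoulli factors in the cross terms) against $h_k f_{k+1}^2$-type products, one has to re-expand $f^{(i)}$ in terms of $f^{(i,j)}$ and carefully track the resulting polynomial in $p_j$ (and symmetrically $p_i$). After collecting, the inequality should become, for fixed $i\neq j$, a statement of the form $\sum_{\text{third index }l} (\text{nonneg combination of } p_l, p_l', \text{log-concavity defects}) \geq 0$, and the final reduction will lean on Lemma \ref{lem:UtotalMonotone}'s companion — no wait, it leans rather on Proposition \ref{prop:BernoulliInequality}. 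I would finish by checking the boundary values $k=0$ and $k=n-2$ separately, where several of the $\pi$ and $f$ terms vanish and the inequality degenerates to the plain log-concavity $f_{k+1}^2 \geq f_k f_{k+2}$ combined with $g_k^2 \leq f_k \cdot(\text{something})$, which follows from Cauchy–Schwarz applied to the representation $g_k = \sum_i p_i' f^{(i)}_k$.
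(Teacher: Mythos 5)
Your overall strategy --- expanding the difference of the two sides of \eqref{eq:NewCondition4} over pairs $i \neq j$ in terms of the sub-mass functions $\pi_k = f^{(i,j)}_k$, and reducing to cubic inequalities for $\pi$ such as the $2$-fold log-concavity of Proposition \ref{prop:BernoulliInequality} and the cubics \eqref{eq:c1}, \eqref{eq:c1bar} --- is indeed the route the paper takes. But as written your proposal has two genuine gaps. First, the decisive algebraic identity is never produced: the paper quotes from \cite{johnson34} the exact pairwise decomposition \eqref{eq:quadratic}, namely
\begin{equation*}
\left( 2 g_k g_{k+1} f_{k+1} - g_k^2 f_{k+2} - g_{k+1}^2 f_k \right) - h_k (f_{k+1}^2-f_k f_{k+2})
= \sum_{i<j} Q_{i,j}(p_i',p_j'),
\end{equation*}
where $Q_{i,j}(x,y) = x^2 p_j(1-p_j) b_{i,j} + y^2 p_i(1-p_i) b_{j,i} + 2xy\, p_i(1-p_i)p_j(1-p_j) c_{i,j}$ and $c_{i,j}$ is precisely the cubic of \eqref{eq:2foldlog} evaluated at $\pi = f^{(i,j)}$. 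Your sketch gestures at the ``bookkeeping'' but never verifies that the expansion closes up pairwise with no leftover terms; that identity is the hard combinatorial content, and your ``expectation'' that everything reduces to known cubics is not a proof of it.

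Second, and more importantly, your claim that the diagonal and cross terms ``exactly cancel or combine into a manifestly nonnegative expression'' misses the essential difficulty of the non-monotone case: the cross term carries the factor $p_i' p_j'$, whose sign is arbitrary, so no term-by-term positivity argument can work. The paper's resolution is to treat each $Q_{i,j}$ as a quadratic form in $(p_i', p_j')$ and show its discriminant is nonpositive; this requires \emph{both} the lower bound $b_{i,j} \geq -\tfrac{1}{2}(p_i(1-p_j)+p_j(1-p_i))\, c_{i,j}$ imported from \cite{johnson34} \emph{and} the sign $c_{i,j} \leq 0$, the latter being exactly where Proposition \ref{prop:BernoulliInequality} enters: together they give $b_{i,j} b_{j,i} \geq \tfrac{1}{4}\bigl(p_i(1-p_j)+p_j(1-p_i)\bigr)^2 c_{i,j}^2 \geq p_i(1-p_i)p_j(1-p_j)\, c_{i,j}^2$. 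Without this discriminant step your argument would at best recover the monotone case already settled in \cite{johnson34}. Finally, your separate treatment of $k=0$ and $k=n-2$ is unnecessary (the identity and the discriminant bound hold uniformly in $k$), and the suggested Cauchy--Schwarz bound on $g_k = \sum_i p_i' f^{(i)}_k$ there is not valid when the $p_i'$ have mixed signs.
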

\begin{corollary} \label{cor:fgh} For $k= 0, \ldots, n-2$, the $h_k f_k \leq g_k^2$ and $h_kf_{k+2} \leq g_{k+1}^2$.\end{corollary}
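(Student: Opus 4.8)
The plan is to derive both inequalities purely algebraically from Proposition~\ref{prop:hupper} together with the log-concavity of $(f_k)$ (Proposition~\ref{prop:logConc}). The mechanism is that the right-hand side of \eqref{eq:NewCondition4}, after multiplication by $f_k$ (respectively $f_{k+2}$), differs from $g_k^2(f_{k+1}^2 - f_k f_{k+2})$ (respectively $g_{k+1}^2(f_{k+1}^2 - f_k f_{k+2})$) only by a perfect square. Explicitly, I would first record the two elementary identities
\[
f_k\bigl(2 g_k g_{k+1} f_{k+1} - g_k^2 f_{k+2} - g_{k+1}^2 f_k\bigr) = g_k^2\bigl(f_{k+1}^2 - f_k f_{k+2}\bigr) - \bigl(f_{k+1} g_k - f_k g_{k+1}\bigr)^2
\]
and
\[
f_{k+2}\bigl(2 g_k g_{k+1} f_{k+1} - g_k^2 f_{k+2} - g_{k+1}^2 f_k\bigr) = g_{k+1}^2\bigl(f_{k+1}^2 - f_k f_{k+2}\bigr) - \bigl(f_{k+1} g_{k+1} - f_{k+2} g_k\bigr)^2,
\]
each of which is checked by expanding.

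Multiplying \eqref{eq:NewCondition4} by $f_k\ge 0$ and substituting the first identity gives
\[
h_k f_k\bigl(f_{k+1}^2 - f_k f_{k+2}\bigr) \le g_k^2\bigl(f_{k+1}^2 - f_k f_{k+2}\bigr) - \bigl(f_{k+1} g_k - f_k g_{k+1}\bigr)^2 \le g_k^2\bigl(f_{k+1}^2 - f_k f_{k+2}\bigr),
\]
that is, $\bigl(h_k f_k - g_k^2\bigr)\bigl(f_{k+1}^2 - f_k f_{k+2}\bigr)\le 0$; since $f_{k+1}^2 - f_k f_{k+2}\ge 0$ by log-concavity, dividing through (when this factor is nonzero) yields $h_k f_k\le g_k^2$. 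Multiplying \eqref{eq:NewCondition4} by $f_{k+2}\ge 0$ and using the second identity gives $h_k f_{k+2}\le g_{k+1}^2$ in exactly the same way.

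The one step requiring care — and the point I expect to be the main obstacle — is the degenerate case $f_{k+1}^2 = f_k f_{k+2}$, in which the division above is not allowed. I would handle this by a density argument. When all $p_i\in(0,1)$, the probability generating function $\prod_{i=1}^n\bigl((1-p_i)+p_i z\bigr)$ is a real-rooted polynomial of degree $n$ with strictly positive coefficients, so Newton's inequalities give $f_{k+1}^2 > f_k f_{k+2}$ for every $0\le k\le n-2$; the computation above then applies without restriction and shows $h_k f_k\le g_k^2$ and $h_k f_{k+2}\le g_{k+1}^2$ throughout $(0,1)^n$. Since, for fixed derivatives $p_i'$, the quantities $f_k$, $g_k$, $h_k$ are polynomials in $\vc{p}$, the two inequalities extend by continuity from the dense set $(0,1)^n$ to the whole cube $[0,1]^n$, which is all that is required. (If one prefers to avoid the limiting argument, one can instead note that $f_{k+1}^2 = f_k f_{k+2}$ forces $f_k = 0$ or $f_{k+2} = 0$ — so one of the two bounds is then immediate — and establish the remaining bound by a short explicit computation at the corresponding endpoint of the support of $S$, where the active parameters are pinned to $0$ or $1$, reducing it to an inequality of the form $h_k f_k = g_k^2 - (\text{a manifestly nonnegative term})$.)
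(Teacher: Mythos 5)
Your proof is correct and follows essentially the same route as the paper's: multiply \eqref{eq:NewCondition4} by $f_k$ (resp.\ $f_{k+2}$) and observe that the difference from $g_k^2(f_{k+1}^2-f_kf_{k+2})$ (resp.\ $g_{k+1}^2(f_{k+1}^2-f_kf_{k+2})$) is a perfect square. The only difference is that you explicitly treat the degenerate case $f_{k+1}^2=f_kf_{k+2}$ via strict Newton inequalities on $(0,1)^n$ plus continuity, a point the paper's one-line proof passes over silently; this is a legitimate (and slightly more careful) completion of the same argument.
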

In \cite{johnson34}, Proposition \ref{prop:hupper} is referred to as Condition 4, and is proved for the case where all $p_i'$ have the same sign.
In fact, we show  using Proposition \ref{prop:BernoulliInequality} that this inequality holds in general.

\section{Proof of the Shepp--Olkin conjecture} \label{sec:proof}

Theorem~\ref{th:SO} is obvious if $n=1$. We now fix some $n \geq 2$ and 
take $X_1,\ldots X_n$ to be independent Bernoulli variables with parameters $p_1,\ldots,p_n$, where each $p_i = p_i(t)$ is an affine function of $t$ (with constant derivative $p_i'$).

\begin{proposition} \label{prop:SOuk}
To prove Theorem~\ref{th:SO}, it suffices to show that $u_k \geq 0$ for any choice of parameters $(p_1, \ldots, p_n)$, of slopes $(p_1', \ldots, p_n')$ and index $k \in \{0,\ldots,n-2 \}$, where we write:
\begin{equation} \label{eq:defu}
u_k := h_k \log\left( \frac{f_k f_{k+2}}{f_{k+1}^2} \right) + \left( \frac{g_k^2}{f_k} - 2 \frac{g_k g_{k+1}}{f_{k+1}} + \frac{g_{k+1}^2}{f_{k+2}} \right).
\end{equation}
\end{proposition}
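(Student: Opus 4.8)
The plan is to express the second derivative of the entropy $H(\vc{p}(t))$ with respect to $t$ in terms of $f_k$, $g_k$ and $h_k$, and to show that the resulting expression is a nonnegative combination of the quantities $u_k$. Concretely, since $U(x) = x\log x$, one has $\frac{d}{dt} H = -\sum_k U'(f_k) f_k' = -\sum_k (1 + \log f_k) f_k' = -\sum_k (\log f_k) f_k'$, using $\sum_k f_k' = 0$. Differentiating once more gives
\begin{equation} \label{eq:Hsecond}
-\frac{d^2}{dt^2} H = \sum_k (\log f_k) f_k'' + \sum_k \frac{(f_k')^2}{f_k}.
\end{equation}
The first step is therefore to compute $f_k'$ and $f_k''$ in terms of the subfamilies $f^{(i)}_k$ and $f^{(i,j)}_k$. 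Conditioning on $X_i$ one gets $f_k = (1-p_i) f^{(i)}_k + p_i f^{(i)}_{k-1}$, whence $\frac{\partial f_k}{\partial p_i} = f^{(i)}_{k-1} - f^{(i)}_k$; summing against $p_i'$ yields $f_k' = g_{k-1} - g_k$ (with the convention that $g$ vanishes outside its range). Differentiating once more in the same way and using that the $p_i$ are affine in $t$, one finds $f_k'' = h_{k-2} - 2 h_{k-1} + h_k$, again with the obvious conventions. Substituting these into \eqref{eq:Hsecond} and performing an Abel/summation-by-parts rearrangement should collapse the expression into a sum over $k$ of the local building blocks; this is the routine but bookkeeping-heavy part.

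The key algebraic step is to recognize that, after summation by parts, $-\frac{d^2}{dt^2}H$ becomes (up to telescoping boundary terms that vanish) a sum of terms, each attached to a consecutive triple $f_k, f_{k+1}, f_{k+2}$, of the shape
\begin{equation} \label{eq:uform}
h_k \log\!\left( \frac{f_k f_{k+2}}{f_{k+1}^2} \right) + \frac{g_k^2}{f_k} - 2\frac{g_k g_{k+1}}{f_{k+1}} + \frac{g_{k+1}^2}{f_{k+2}},
\end{equation}
which is exactly $u_k$. The logarithmic term arises from pairing $\log f_k$ against $f_k''= h_{k-2}-2h_{k-1}+h_k$ and reindexing so that each $h_k$ multiplies $\log f_k - 2\log f_{k+1} + \log f_{k+2} = \log(f_k f_{k+2}/f_{k+1}^2)$; the rational term arises from $\sum_k (f_k')^2/f_k = \sum_k (g_{k-1}-g_k)^2/f_k$, which one expands as $\sum_k (g_{k-1}^2 + g_k^2)/f_k - 2\sum_k g_{k-1}g_k/f_k$ and regroups so that the cross term $-2 g_k g_{k+1}/f_{k+1}$ sits between $g_k^2/f_k$ and $g_{k+1}^2/f_{k+2}$. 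One must check that the regrouping is exact, i.e. that every $g_k^2/f_k$ is accounted for with the right coefficient and that the boundary indices ($k=0$ and the top of the support, where some of $f_k, g_k, h_k$ vanish or the conventions kick in) cause no discrepancy.

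Having established $-\frac{d^2}{dt^2} H = \sum_{k=0}^{n-2} u_k$, concavity of $H$ in $t$ is equivalent to $\sum_k u_k \geq 0$, and it certainly suffices to prove the stronger termwise bound $u_k \geq 0$ for every $k$ and every choice of parameters and slopes. The main obstacle I anticipate is not the differentiation but the summation-by-parts bookkeeping: keeping track of index shifts and the degenerate boundary terms so that the sum genuinely telescopes to $\sum_k u_k$ with no leftover cross-terms, and confirming that the conventions for $g_k$ and $h_k$ outside their natural ranges (forced by the definitions \eqref{eq:deffgh}–\eqref{eq:deffgh2}) are consistent with those reindexings. The actual positivity $u_k\geq 0$ is then deferred to the later technical results — it is where Proposition \ref{prop:hupper} (controlling $h_k$), Corollary \ref{cor:fgh}, and Lemma \ref{lem:UtotalMonotone} (controlling the concave function $U$ near $1$) will be brought to bear.
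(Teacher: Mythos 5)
Your proposal follows essentially the same route as the paper: compute $f_k' = g_{k-1}-g_k$ and $f_k'' = h_k - 2h_{k-1}+h_{k-2}$, substitute into $H''(t) = -\sum_k U''(f_k)\,(f_k')^2 - \sum_k U'(f_k)\, f_k''$, and regroup by consecutive triples so that each $h_k$ meets the second difference of $U'(f_\cdot)$, which for $U(x)=x\log x$ is $\log(f_k f_{k+2}/f_{k+1}^2)$. The one correction concerns your claim that the boundary terms vanish and that one obtains the exact identity $-H'' = \sum_{k=0}^{n-2} u_k$. The $h$-part does reindex exactly (since $h$ is supported on $\{0,\ldots,n-2\}$), but the quadratic part satisfies
$\sum_{k=0}^{n} (g_{k-1}-g_k)^2/f_k = \sum_{k=0}^{n-2}\bigl(g_k^2/f_k - 2 g_k g_{k+1}/f_{k+1} + g_{k+1}^2/f_{k+2}\bigr) + g_0^2/f_1 + g_{n-1}^2/f_{n-1}$,
so the leftover terms $g_0^2/f_1$ and $g_{n-1}^2/f_{n-1}$ do not vanish in general. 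They are nonnegative, however, so one only gets the inequality $H''(t) \leq -\sum_{k=0}^{n-2} u_k$ rather than an equality; this is precisely what the paper records at \eqref{eq:drop}, and the sufficiency of the termwise bound $u_k \geq 0$ is unaffected.
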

\begin{proof}
As in \cite{johnson34}, direct calculation (for example using the probability generating function) shows that the first two derivatives
of  $f_k$ satisfy
\begin{eqnarray}
\frac{d f_k}{dt}(t) & = &  g_{k-1} - g_k , \label{eq:fder} \\
\frac{d^2 f_k}{dt^2}(t) & = &  h_{k} - 2 h_{k-1} + h_{k-2}. \label{eq:fder2}
\end{eqnarray}
Hence, we can write the derivative of the entropy: 
\begin{eqnarray}
H''(t) &  = & - \sum_{k=0}^n \frac{d^2}{dt^2} U(f_k(t)) \nonumber \\
& = & - \sum_{k=0}^n U''(f_k) \left( \frac{d f_k}{dt} \right)^2 - \sum_{k=0}^n U'(f_k) \left( \frac{d^2 f_k}{dt^2} \right) 
\nonumber \\
& = & - \sum_{k=0}^n U''(f_k) \left( g_{k-1} - g_k \right)^2 - \sum_{k=0}^n U'(f_k)  \left(   h_{k} - 2 h_{k-1} + h_{k-2}  \right) 
\label{eq:drop2} \\
& \leq & - \sum_{k=0}^{n-2} \biggl[ \left( g_{k}^2 U''(f_k) - 2 g_k g_{k+1} U''(f_{k+1})  + g_{k+1}^2 U''(f_{k+2}) \right)  \nonumber \\
&  & \hspace*{1.5cm}
+ h_k \left( U'(f_k) - 2 U'(f_{k+1}) + U'(f_{k+2} ) \right) \biggr].  \label{eq:drop}
\end{eqnarray}
The form of Equation (\ref{eq:drop2}) follows using \eqref{eq:fder} and \eqref{eq:fder2}.
The relabelling in \eqref{eq:drop} uses the fact that $g_k$ is supported on $ \{ 0, \ldots, n-1 \}$ and $h_k$ supported
on $\{ 0, \ldots, n-2 \}$. This expression is an inequality since (taking into account the end points of
the range of summation) we remove the terms
$- g_{n-1}^2 U''(f_{n-1}) - g_0^2 U''(f_1)$, which are negative assuming that $U'' \geq 0$.

Making the choice of $U(x) = x \log x$ we deduce the form of $u_k$ given in \eqref{eq:defu}, since $U''(x) = 1/x \geq 0$ in this
case.
\end{proof}

One of the main differences with the monotonic case studied in \cite{johnson34} is that the quantities $(g_l)$ and $(h_l)$ are not necessarily positive. However, we note the following:

\begin{lemma}
If $h_k \leq 0$ then $u_k  \geq 0$. 
\end{lemma}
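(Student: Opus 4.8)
The statement to prove is: if $h_k \leq 0$ then $u_k \geq 0$, where $u_k$ is defined in \eqref{eq:defu}. The plan is to deal with the two summands of $u_k$ separately and show each is nonnegative under the hypothesis $h_k \leq 0$.

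First I would handle the logarithmic term $h_k \log\!\left( f_k f_{k+2}/f_{k+1}^2 \right)$. By Proposition~\ref{prop:logConc} (log-concavity of Bernoulli sums), we have $f_{k+1}^2 \geq f_k f_{k+2}$, at least when $f_{k+1} > 0$; hence $f_k f_{k+2}/f_{k+1}^2 \leq 1$ and so $\log\!\left( f_k f_{k+2}/f_{k+1}^2 \right) \leq 0$. Multiplying this nonpositive quantity by $h_k \leq 0$ yields a nonnegative contribution. (One should note that $f_{k+1} = 0$ forces $f_k = 0$ or $f_{k+2} = 0$ by unimodality/log-concavity considerations, and in that degenerate situation the relevant terms need to be interpreted as limits, exactly as in the treatment of $U(0) := 0$; I would dispatch this boundary case with a brief remark.)

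Next I would handle the bracketed quadratic-form term $g_k^2/f_k - 2 g_k g_{k+1}/f_{k+1} + g_{k+1}^2/f_{k+2}$. This is a quadratic form in the pair $(g_k, g_{k+1})$ with matrix $\begin{pmatrix} 1/f_k & -1/f_{k+1} \\ -1/f_{k+1} & 1/f_{k+2}\end{pmatrix}$, whose determinant is $\frac{1}{f_k f_{k+2}} - \frac{1}{f_{k+1}^2} = \frac{f_{k+1}^2 - f_k f_{k+2}}{f_k f_{k+1}^2 f_{k+2}} \geq 0$ again by log-concavity, while the diagonal entry $1/f_k > 0$; so the form is positive semidefinite and the whole bracket is $\geq 0$ regardless of the sign of the $g_l$. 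Summing the two nonnegative contributions gives $u_k \geq 0$.

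The argument is essentially immediate once one observes that the only input needed is log-concavity of $(f_k)$, which is classical and cited in the excerpt as Proposition~\ref{prop:logConc}. I do not anticipate a real obstacle here; the only thing requiring a little care is the bookkeeping around vanishing probabilities $f_k$ (where $1/f_k$ is formally infinite) — one treats these by the same convention $U(0)=0$ used throughout, noting that a zero denominator in the bracket is always paired with a zero numerator because $g_l$ inherits the appropriate support constraints from the $f^{(i)}_k$, so each problematic term is genuinely absent rather than infinite.
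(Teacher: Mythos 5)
Your proof is correct and follows essentially the same route as the paper: the same decomposition of $u_k$ into the logarithmic term (nonnegative since $h_k \leq 0$ multiplies a nonpositive logarithm, by log-concavity) and the quadratic term, with your positive-semidefiniteness argument being just a repackaging of the paper's chain $\frac{g_k^2}{f_k} + \frac{g_{k+1}^2}{f_{k+2}} \geq 2 \frac{|g_k||g_{k+1}|}{\sqrt{f_k f_{k+2}}} \geq 2\frac{|g_k||g_{k+1}|}{f_{k+1}} \geq 2 \frac{g_k g_{k+1}}{f_{k+1}}$ via AM--GM and log-concavity. Your extra attention to the vanishing-$f_k$ boundary cases is a small refinement the paper leaves implicit.
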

\begin{proof}
This follows quite easily from the log-concavity property for the $(f_l)$, see Proposition~\ref{prop:logConc}, which implies that $h_k \log\left( \frac{f_k f_{k+2}}{f_{k+1}^2} \right) \geq 0$. On the other hand, using the arithmetic-geometric inequality and then again the log-concavity property, we have:
\begin{equation*} 
\frac{g_k^2}{f_k} + \frac{g_{k+1}^2}{f_{k+2}} \geq  2 \frac{|g_k| |g_{k+1}|}{ \sqrt{f_k f_{k+2}}} \geq 2 \frac{|g_k| |g_{k+1}|}{ f_{k+1}} \geq 2 \frac{g_k g_{k+1}}{f_{k+1}},
\end{equation*}
which proves that $u_k \geq 0$.
\end{proof}

Hence, from now on, we can suppose that $h_k>0$. 

\begin{proof}[Proof of Theorem \ref{th:SO}] 
We set:
\begin{equation} \label{eq:abc}
A_k := \frac{g_k^2-f_kh_k}{g_k^2} \ , \ B_k := \frac{|g_k| |g_{k+1}|-f_{k+1}h_k}{|g_k| |g_{k+1}|} \ , \ C_k := \frac{g_{k+1}^2-f_{k+2}h_k}{g_{k+1}^2},
\end{equation}
and:
\begin{equation} \label{eq:alphaetc}
\alpha_k := \frac{g_k^2}{f_k} \ , \ \beta_k := \frac{|g_k||g_{k+1}|}{f_{k+1}} \ , \ \gamma_k := \frac{g_{k+1}^2}{f_{k+2}}.
\end{equation}
Noticing that $ \log(g_k^2) + \log(g_{k+1}^2) - 2 \log(|g_k| |g_{k+1}|) = 0, $ we have: 
\begin{equation}
\log\left( \frac{f_k f_{k+2}}{f_{k+1}^2} \right) = \log\left(\frac{h_k f_k}{g_k^2} \right) - 2 \log\left(\frac{h_k f_{k+1}}{|g_k| |g_{k+1}|} \right)+\log\left(\frac{h_k f_{k+2}}{g_{k+1}^2} \right).
\end{equation}
Straightforward calculations give:
\begin{equation} \label{eq:ukTransform}
u_k \geq \alpha_k U(1-A_k)-2 \beta_k U(1-B_k)+\gamma_k U(1-C_k) + \left( \alpha_k A_k - 2 \beta_k B_k + \gamma_k C_k \right).
\end{equation}

 We need to verify the assumptions of
Lemma \ref{lem:UtotalMonotone} on $A_k$, $B_k$, $C_k$.
First,  we can show that $B_k^2 \leq A_k C_k$, which is equivalent to
\begin{equation}
h_k (f_{k+1}^2-f_k f_{k+2}) \leq 2 |g_k||g_{k+1}| f_{k+1} - g_k^2 f_{k+2} - g_{k+1}^2 f_k.
\end{equation}
However, we notice that this is implied by the stronger inequality \eqref{eq:NewCondition4} from Proposition \ref{prop:hupper}.
Second, we need to check the
 the inequalities $A_k \geq 0$ and $C_k \geq 0$. As we have  proved that $0 \leq B_k^2 \leq A_k C_k$, it suffices to verify that $A_k \geq 0$, which is a  restatement of Corollary \ref{cor:fgh}.

Clearly, the log-concavity of $f$ shows that $\beta_k^2 \leq \alpha_k \gamma_k$ for the quantities defined in
\eqref{eq:alphaetc}
We can thus apply Lemma \ref{lem:UtotalMonotone} to deduce that $u_k \geq 0$ and prove Theorem~\ref{th:SO}. 
\end{proof}

\section{Tsallis and R\'{e}nyi entropy} \label{sec:tsallis}

Having resolved the Shepp-Olkin conjecture for (Shannon) entropy, it is natural to want to generalize our result to a wider
class of entropy-like functionals. Recall the following definitions, each of which reduce to the Shannon entropy
\eqref{eq:shannon} as $q \rightarrow 1$.

\begin{definition} Given a probability mass function $f$ supported on $\{ 0, \ldots, n \}$, for $0 \leq q \leq \infty$  define
\begin{eqnarray}
\mbox{1. $q$-R\'{e}nyi entropy (see \cite{renyi2}): \;\;}
\label{eq:renyi} H_{R,q}(f) &  = & \frac{1}{1-q} \log \left( \sum_{x=0}^n f_x^q \right), \\
\mbox{2. $q$-Tsallis entropy (see \cite{tsallis}): \;\;}
 \label{eq:tsallis} H_{T,q}(f) & = & \frac{1}{q-1}  \left( 1-  \sum_{x=0}^n f_x^q \right).\end{eqnarray}
\end{definition}

We know that  $H_{R,\infty}(f) = -\log \max_x f(x)$ (the min-entropy), which is not concave for Bernoulli sums (e.g. for Bernoulli($p$) with $p < 1/2$, this
is just $-\log (1- p)$, which is convex in $p$).
For $q=0$, R\'{e}nyi entropy is the log of the size of the support, which is constant (and hence concave) for Bernoulli sums. 
This suggests the following conjecture:
\begin{conjecture}[Generalized Shepp-Olkin conjecture] \mbox{ } \label{conj:gSO}
\begin{enumerate}
\item There is a critical 
$q_R^*$ such that the $q$-R\'{e}nyi entropy of all Bernoulli sums is concave for $q \leq q_R^*$, and the entropy of some interpolation is convex for $q > q^*_R$. 
\item There is a  critical 
$q_T^*$ such that the $q$-Tsallis entropy of all Bernoulli sums is concave for $q \leq q_T^*$, and the entropy of some interpolation is convex for $q > q^*_T$. 
\end{enumerate}
Indeed (based on Lemma \ref{lem:values} below) we conjecture that $q_R^* = 2$ and $q_T^* = 3.65986\ldots$, the root of $2 - 4 q + 2^q = 0$.
\end{conjecture}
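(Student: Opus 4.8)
The plan is to transfer the machinery behind Theorem~\ref{th:SO} to the one-parameter functionals $H_{T,q}$ and $H_{R,q}$, using the affine parametrisation $p_i=p_i(t)$ as before. Set $M_q(t) := \sum_{k=0}^n f_k(t)^q$ and $U_q(x) := \tfrac{x^q-x}{q-1}$ (so $U_q(x)\to x\log x$ as $q\to1$ and $H_{T,q}=-\sum_k U_q(f_k)$, since $\sum_k f_k=1$). Because $U_q''(x)=q\,x^{q-2}$ has constant sign for $q>0$, the reduction of Proposition~\ref{prop:SOuk} goes through unchanged: expanding via \eqref{eq:fder} and \eqref{eq:fder2} and relabelling as in \eqref{eq:drop}, the boundary terms $-g_{n-1}^2U_q''(f_{n-1})-g_0^2U_q''(f_1)$ are non-positive and may be dropped, so concavity of $H_{T,q}$ follows once one shows $u_k^{(q)}\geq0$ for all $k$, where $u_k^{(q)}$ is the analogue of \eqref{eq:defu} with $\log\bigl(f_kf_{k+2}/f_{k+1}^2\bigr)$ replaced by $U_q'(f_k)-2U_q'(f_{k+1})+U_q'(f_{k+2})$ and $1/f_\bullet$ by $U_q''(f_\bullet)$. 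For R\'enyi entropy the extra logarithm gives $H_{R,q}''=\tfrac{1}{1-q}\,\tfrac{M_q''M_q-(M_q')^2}{M_q^2}$, so $H_{R,q}$ is concave iff $M_q$ is log-convex in $t$ (for $q>1$; log-concave for $q<1$) --- strictly stronger than the convexity of $M_q$ needed for Tsallis, which matches $q_R^*<q_T^*$; here one must additionally control the term $-(M_q')^2$ after expanding $M_q',M_q''$ in the $g_\bullet,h_\bullet$.

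The first genuine step is an analogue of the ``$h_k\le0$'' lemma: when $h_k$ carries the sign making $h_k\bigl(U_q'(f_k)-2U_q'(f_{k+1})+U_q'(f_{k+2})\bigr)\ge0$ (by log-concavity, Proposition~\ref{prop:logConc}), the remaining quadratic-in-$g$ part is dispatched by AM--GM plus log-concavity exactly as before, once one tracks how the sign of $q-1$ interacts with $U_q''$. For the opposite sign of $h_k$, the heart of the matter is a version of Lemma~\ref{lem:UtotalMonotone} for $U=U_q$. Here the key observation is that $U_q$ automatically satisfies hypotheses (i) $U_q(1)=0$ and (ii) $U_q'(1)=1$, while (iii) $U_q'''\le0$ and (iv) $\log U_q''$ convex \emph{both} hold precisely for $0<q\le2$ (indeed $U_q'''(x)=q(q-2)x^{q-3}$ and $\log U_q''(x)=\log q+(q-2)\log x$). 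Provided the transformation leading to \eqref{eq:ukTransform} can be redone for $U_q$ --- it used the identity $\alpha_kU(1-A_k)=h_k\log(1-A_k)$, which is special to $U(x)=x\log x$ via $U''(x)=1/x$, so one must re-derive the correct substitution, very likely with modified $A_k,B_k,C_k$ --- Lemma~\ref{lem:UtotalMonotone} then applies verbatim and yields concavity of $H_{T,q}$ for all $0<q\le2$. For R\'enyi, establishing concavity even just below $q=2$ is less automatic because of the $-(M_q')^2$ term, but if the same range survives that is already sharp, $q_R^*=2$.

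The hard part --- and the reason this is stated as a conjecture --- is closing the gap between $q\le2$ and the conjectured $q_T^*=3.65986\ldots$ for Tsallis. Hypotheses (iii)--(iv) of Lemma~\ref{lem:UtotalMonotone} are sufficient but far from necessary, so one must find the \emph{sharp} condition on the triples $(A,B,C)$ and $(\alpha,\beta,\gamma)$ --- subject to $B^2\le AC$, $\beta^2\le\alpha\gamma$ \emph{and} the extra structural constraints forced by \eqref{eq:NewCondition4} and Corollary~\ref{cor:fgh} --- under which \eqref{eq:UtotalMonotone} holds for $U=U_q$; this will almost certainly need Bernoulli-sum inequalities strengthening Proposition~\ref{prop:BernoulliInequality} (higher iterated log-concavity of $(\pi_k)$, in the spirit of Br\"{a}nd\'{e}n~\cite{branden}). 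By contrast, the converse ``convexity for $q>q^*$'' halves should follow from explicit interpolations: the natural candidate is a binomial or two-variable family, for which Lemma~\ref{lem:values} presumably already computes the relevant second derivative, and matching its sign change to the stated roots ($q=2$ for R\'enyi, the root of $2-4q+2^q=0$ for Tsallis) is precisely what identifies the critical values --- the remaining work there being to promote ``one bad example for some $q>q^*$'' to ``a bad example for every $q>q^*$,'' e.g.\ by varying $n$. I expect the positive direction up to the sharp threshold to be the main obstacle, because it is exactly where the special algebraic structure of Shannon entropy, namely the logarithm that makes \eqref{eq:ukTransform} collapse so cleanly, is no longer available.
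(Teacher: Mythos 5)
The statement you are asked to prove is a \emph{conjecture}: the paper does not prove it, and offers only partial evidence, namely Lemma \ref{lem:values} (explicit families whose $q$-R\'enyi, resp.\ $q$-Tsallis, entropy becomes convex for $q>2$, resp.\ $q>q^*$), the chain-rule lemma relating concavity of the two entropies via \eqref{eq:renyider2}, and a Remark reducing $H_{T,q}''\leq 0$ to the positivity of a $q$-analogue of $u_k$ together with a formal Benamou--Brenier heuristic. Judged against that, your proposal is an accurate blind reconstruction of the paper's supporting evidence rather than a proof, and you are honest about this. Your reduction to $u_k^{(q)}\geq 0$ with $U_q(x)=(x^q-x)/(q-1)$ agrees (up to the factor $q$) with the paper's Remark; your identity for $H_{R,q}''$ and the resulting log-convexity/convexity comparison reproduces the paper's Lemma 4.5; and your guess that the converse halves come from a Bernoulli or Binomial$(2,p)$ family is exactly what Lemma \ref{lem:values} does. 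Your observation that hypotheses (iii) and (iv) of Lemma \ref{lem:UtotalMonotone} hold for $U_q$ precisely when $0<q\leq 2$ is correct ($U_q'''(x)=q(q-2)x^{q-3}$ and $\log U_q''(x)=\log q+(q-2)\log x$) and is a genuinely useful remark not made in the paper, dovetailing with the conjectured $q_R^*=2$; the paper's own extra evidence is instead the continuous heuristic $u=v'^2f^q$ under $g=vf$, $h=v^2f$.

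The genuine gaps are the ones you name, plus one you do not. First, the passage to \eqref{eq:ukTransform} really does exploit $U(x)=x\log x$ (the collapse $\alpha_k U(1-A_k)=h_k\log(f_kh_k/g_k^2)$ uses $U''(x)=1/x$), so even the claim of Tsallis concavity for $1<q\leq 2$ is not established by saying Lemma \ref{lem:UtotalMonotone} ``applies verbatim''; the substitution must be rebuilt and may fail. Second, the range $2<q\leq q_T^*$ for Tsallis is entirely open, as you say. Third, and not addressed by either you or the paper: for a \emph{critical} value to exist at all one needs concavity for all Bernoulli sums at parameter $q$ to imply the same at every $q'<q$; without that monotonicity in $q$, exhibiting counterexamples for each $q>q^*$ and concavity for $q\leq 2$ would still leave the dichotomy unproved. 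This is part of why the statement is a conjecture and not a theorem.
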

We mention some limited progress towards this conjecture.
\begin{lemma} \label{lem:values} \mbox{ }
\begin{enumerate}
\item 
 For any $q > 2$, there exists a Shepp--Olkin interpolation with convex $q$-R\'{e}nyi entropy. 
\item
 For any $q > q^* := 3.65986 \ldots$, there exists a Shepp--Olkin interpolation with convex $q$-Tsallis entropy.
\end{enumerate}
\end{lemma}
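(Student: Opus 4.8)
The plan is to prove each part by exhibiting an explicit family of Shepp--Olkin interpolations on a small number of Bernoulli variables and computing the second derivative of the relevant entropy at a convenient point. Since concavity of the whole function is what the conjecture asks, to \emph{disprove} concavity for a given $q$ it suffices to find one interpolation $(p_1(t),\ldots,p_n(t))$ and one $t_0 \in [0,1]$ at which $\frac{d^2}{dt^2} H_{R,q}$ (resp.\ $H_{T,q}$) is strictly positive. I would start with the smallest nontrivial case, $n=2$, with parameters $p_1(t), p_2(t)$ affine in $t$, so that $f_0 = (1-p_1)(1-p_2)$, $f_1 = p_1(1-p_2)+p_2(1-p_1)$, $f_2 = p_1 p_2$, and then look for a symmetric configuration (for instance $p_1(t) = t$, $p_2(t) = 1-t$, or $p_1 = p_2 = $ a common affine function, or a mix) where the algebra collapses. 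The natural guess, given that the claimed critical values are $q_R^* = 2$ and the root of $2 - 4q + 2^q = 0$, is that the extremal interpolation is the one passing through the uniform-on-a-sub-block distribution, e.g.\ $t_0$ where $f$ is uniform on two adjacent points, since that is where power-sum functionals $\sum_x f_x^q$ behave most delicately.

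Concretely, for the Tsallis case I would compute $\Psi_q(t) := \sum_x f_x(t)^q$ and note that $H_{T,q}$ is concave iff $\Psi_q$ is convex (since $q>1$ makes $\frac{1}{q-1}>0$ and we have $H_{T,q} = \frac{1}{q-1}(1-\Psi_q)$); so I need a point where $\Psi_q'' < 0$. For the Rényi case, $H_{R,q} = \frac{1}{1-q}\log \Psi_q$, and $H_{R,q}'' = \frac{1}{1-q}\left( \frac{\Psi_q''}{\Psi_q} - \frac{(\Psi_q')^2}{\Psi_q^2} \right)$; since $\frac{1}{1-q} < 0$ for $q>1$, convexity of $H_{R,q}$ is equivalent to $\Psi_q \Psi_q'' \leq (\Psi_q')^2$. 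I would choose the interpolation so that at $t_0$ one has $\Psi_q'(t_0) = 0$ (a symmetric configuration will force this), reducing the Rényi condition to simply the sign of $\Psi_q''(t_0)$ — the same quantity as in the Tsallis analysis. Then a single computation of $\Psi_q''(t_0)$ as an explicit function of $q$ yields both critical thresholds: the Tsallis threshold from $\Psi_q''(t_0) = 0$ directly, and (because the $\Psi_q' = 0$ simplification removes the $(\Psi_q')^2$ term) the Rényi threshold as well, with the discrepancy between $2$ and $3.65986\ldots$ arising because in the general Rényi inequality $\Psi_q \Psi_q'' \le (\Psi_q')^2$ one does \emph{not} get to assume $\Psi_q' = 0$ at the worst interpolation — so I should instead pick, for the Rényi part, an interpolation tuned to violate the full inequality, which pushes the threshold down to $q = 2$.

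The cleanest route for part 1 (Rényi, $q>2$) is likely the simplest possible interpolation: take $n = 2$ and compare the endpoint behaviour, or even reuse the thinning/known-special-case structure — e.g.\ an interpolation where one parameter is fixed and another moves — so that $\sum_x f_x^q$ becomes a low-degree rational expression in $t$ whose second derivative factors. I expect the identity $2 - 4q + 2^q$ to emerge from evaluating at a configuration where $f$ has two equal masses of size $1/2$ and the perturbation splits them, giving a contribution $\sim (1/2+\epsilon)^q + (1/2-\epsilon)^q = 2^{1-q}(1 + q(q-1)(2\epsilon)^2 + \cdots)$ weighed against a linear-in-$\epsilon^2$ gain of probability mass elsewhere; balancing the $q$-dependent curvature coefficient against the fixed competing term produces exactly $2^q$ versus a linear function of $q$.

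The main obstacle I anticipate is not any single computation but \emph{guessing the right interpolation}: the bound has to be achieved (or approached) by a specific, possibly non-obvious, path, and a naive symmetric choice may give a threshold that is merely sufficient rather than sharp. I would therefore first do the $n=2$, two-equal-masses calculation to pin down candidate thresholds, then verify — by a short perturbative expansion around that configuration in the space of all affine interpolations — that no nearby path does better, which is what upgrades "there exists a convex interpolation for $q > q^*$" into a believable conjecture that $q^*$ is also the concavity threshold. For the purposes of Lemma~\ref{lem:values} itself, however, only the existence direction is needed, so the proof reduces to: (i) write down the interpolation; (ii) compute $\Psi_q$ and its derivatives at the chosen $t_0$; (iii) check that the resulting sign condition ($\Psi_q \Psi_q'' > (\Psi_q')^2$ for Rényi, $\Psi_q'' < 0$ for Tsallis) holds precisely for $q$ above the stated value.
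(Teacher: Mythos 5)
Your overall strategy --- disprove concavity by exhibiting an explicit interpolation on few variables and computing the sign of the second derivative at a convenient point --- is exactly the paper's strategy, and your reduction of both sign conditions to the power sum $\Psi_q(t) = \sum_x f_x(t)^q$ is correct. For part 2 your plan is essentially the paper's: the paper takes the Binomial$(2,p)$ family (i.e.\ $p_1=p_2=p=t$) and evaluates $H_{T,q}''$ at $p=1/2$, obtaining $2^{3-2q}(2-4q+2^q)\,q/(q-1)$, which is positive precisely for $q>q^*$; this is the symmetric configuration you describe, and carrying out the computation is routine.

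The genuine gap is in part 1. You correctly observe that at a symmetric point where $\Psi_q'=0$ the R\'enyi and Tsallis sign conditions coincide, so your primary candidate configuration can only reproduce the Tsallis threshold $3.65986\ldots$, not the claimed R\'enyi threshold $2$; you acknowledge this and say one should ``pick an interpolation tuned to violate the full inequality,'' but you never identify it, and your guess that it lives among symmetric $n=2$ paths points in the wrong direction. The example the paper uses is the simplest possible one: a single Bernoulli, $T(p)=p^q+(1-p)^q$, in the limit $p\to 0$. There the threshold $q=2$ has nothing to do with balancing curvature at an interior point; it is a boundary effect coming from the exponent $q-2$ in $T''(p)=q(q-1)\bigl(p^{q-2}+(1-p)^{q-2}\bigr)$. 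For $q>2$ this tends to the finite value $q(q-1)$ as $p\to 0$, while $T\to 1$ and $|T'|\to q$, so
\begin{equation*}
H_{R,q}'' \;\longrightarrow\; \frac{q(q-1)}{1-q}-\frac{q^2}{1-q} \;=\; \frac{q}{q-1}\;>\;0,
\end{equation*}
whereas for $1<q<2$ the term $p^{q-2}$ blows up and forces $H_{R,q}''\to-\infty$. Without this (or some equally explicit) example, part 1 is not proved: your proposal establishes a method but leaves the decisive witness unconstructed, and the one configuration you do propose cannot reach the stated threshold.
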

\begin{proof}  \mbox{ } 
\begin{enumerate}
\item
Consider the Bernoulli $\bern(p)$ family, for which $T(p) = p^q + (1-p)^q$. As $p \rightarrow 0$, since $q > 2$, the $T(p) \rightarrow 1$.
Similarly, $T(p)' = q( p^{q-1} + (1-p)^{q-1}) \rightarrow q$, and $T(p)'' = q(q-1) (p^{q-2} + (1-p)^{q-2}) \rightarrow q(q-1)$.  In Equation (\ref{eq:renyider2}) we obtain
$-q - \frac{q^2}{1-q} = q/(q-1) > 0$.

\item  Consider the Binomial $(2,p)$ family, for which $T(p) = (p^2)^q + (2p(1-p))^q + (1-p)^{2q}$.
The second derivative of $H_{T,q}$ at $p = 1/2$ is $2^{3-2q}  ( 2 -  4 q + 2^q) q/(q-1)$, which is positive for $q > q^*$.
\end{enumerate}
\end{proof}
Although R\'{e}nyi and Tsallis entropies are monotone
functions of one another, and so are maximised by the same mass function, the relationship between their concavity properties involves the chain rule. 
\begin{lemma} \mbox{ }
\begin{enumerate}
\item For $q < 1$, if the Tsallis entropy is concave, then so is the R\'{e}nyi entropy.
\item For $q > 1$, if the R\'{e}nyi entropy is concave, then so is the Tsallis entropy.
\end{enumerate}
\end{lemma}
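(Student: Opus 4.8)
The plan is to phrase both concavity statements in terms of the single quantity
\[
T(t) := \sum_{x=0}^n f_x(t)^q
\]
evaluated along a Shepp--Olkin interpolation, and then exploit the monotonicity together with the convexity (resp.\ concavity) of $\exp$ (resp.\ $\log$).

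First I would record the elementary equivalences obtained by differentiating twice in $t$. Since $H_{T,q} = \frac{1}{q-1}(1-T)$, its second derivative is $-\frac{1}{q-1}T''$, so concavity of $H_{T,q}$ is equivalent to convexity of $T$ when $q>1$ (because $\frac{1}{q-1}>0$) and to concavity of $T$ when $q<1$ (because $\frac{1}{q-1}<0$). Likewise, since $H_{R,q} = \frac{1}{1-q}\log T$ and $T(t)>0$ for every $t$ (as $f(t)$ is a probability mass function, some $f_x(t)$ is strictly positive, so $T(t)>0$ and $\log T$ is well defined and smooth), its second derivative is $\frac{1}{1-q}(\log T)''$, so concavity of $H_{R,q}$ is equivalent to concavity of $\log T$ when $q<1$ and to convexity of $\log T$ when $q>1$.

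Next I would close the loop using the composition identity $(\phi\circ g)'' = \phi''(g)\,(g')^2 + \phi'(g)\,g''$. For part (1), assume $q<1$ and $H_{T,q}$ concave; by the above $T$ is concave, and taking $\phi=\log$ (so $\phi'>0$, $\phi''<0$) we get $(\log T)''\le 0$, i.e.\ $\log T$ is concave, hence $H_{R,q}$ is concave. For part (2), assume $q>1$ and $H_{R,q}$ concave; then $\log T$ is convex, and taking $\phi=\exp$ (so $\phi'>0$, $\phi''>0$) we get that $T=\exp(\log T)$ satisfies $T''\ge 0$, i.e.\ $T$ is convex, hence $H_{T,q}$ is concave.

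There is no substantial obstacle here: the argument is just a matter of tracking the signs of $\frac{1}{q-1}$ and $\frac{1}{1-q}$ and applying the composition rule in the correct direction (concave increasing outer function for part (1), convex increasing outer function for part (2)). The only point that genuinely needs a word of justification is the strict positivity of $T(t)$, which is immediate because $f(t)$ is a probability mass function.
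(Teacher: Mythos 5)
Your proof is correct and is essentially the same as the paper's: both arguments come down to the identity $(\log T)'' = T''/T - (T'/T)^2$ and the observation that the squared term enters with a sign determined by $1-q$, which the paper states directly as a relation between $H''_{R,q}$ and $H''_{T,q}$ and you repackage as composition with the increasing concave/convex functions $\log$ and $\exp$. Your explicit remark that $T(t)>0$ is a small but welcome addition.
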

\begin{proof}
If we write $T(t) = \sum_x f_x(t)^q$ for some path, then
\begin{equation} \label{eq:renyider2}
H''_{R,q}(t) = \frac{T''(t)}{(1-q)T(t)} - \frac{1}{1-q} \left( \frac{T'(t)}{T(t)} \right)^2 = \frac{1}{T(t)} H''_{T,q}(f(t)) 
 - \frac{1}{1-q} \left( \frac{T'(t)}{T(t)} \right)^2. \end{equation}
Since the difference has a sign we can control, we can deduce the result holds. \end{proof}

\begin{remark}
We can consider the Tsallis entropy in the framework used earlier. Using Equation \eqref{eq:drop} we deduce that $H_{T,q}''(t) = - q \sum_{k=0}^{n-2} u_k$, where
\begin{eqnarray*}
u_k &:=& - \frac{1}{1-q} h_k (f_k^{q-1}-2f_{k+1}^{q-1}+f_{k+2}^{q-1}) +
 \left(g_k^2 f_k^{q-2} - 2 g_kg_{k+1}f_{k+1}^{q-2} + g_{k+1}^2 f_{k+2}^{q-2} \right).
\end{eqnarray*}
Conjecture \ref{conj:gSO} would follow if $u_k \geq 0$, or even if $\wtu_k := u_k + \nabla_1(v_k) \geq 0$, where $v_k$ is some function
and $\nabla_1$ represents the left discrete derivative.  \end{remark}

There is an heuristic argument which supports such a conjecture, at least in the monotonic case. Using the fact that 
$$
\left(g_k^2 f_k^{q-2} - 2 g_kg_{k+1}f_{k+1}^{q-2} + g_{k+1}^2 f_{k+2}^{q-2} \right) = 
(g_k-g_{k+1})^2 f_{k+1}^{q-2} + \nabla_1\left(g_{k+1}^2 \nabla_1(f_{k+2}^{q-2})\right),$$
by taking $v_k = -  \frac{1-q}{2-q} g_{k+1}^2 \nabla_1( f_{k+2}^{q-2})$ we can write:
\begin{equation}
\wtu_k = -\frac{1}{1-q} h_k \nabla_2(f_{k+2}^{q-1}) + (\nabla_1 g_{k+1})^2 f_{k+1}^{q-2}+\frac{1}{2-q} \nabla_1\left(g_{k+1}^2 \nabla_1(f_{k+2}^{q-2})\right),
\end{equation}
where $\nabla_1$ and $\nabla_2$ stand for the first and second left discrete derivatives. A possible continuous analogy consists in considering, at least at a formal level, the expression
\begin{equation} \label{eq:uTsallis}
u := -\frac{1}{1-q} h (f^{q-1})'' + (g')^2 f^{q-2} + \frac{1}{2-q} \left(g^2 (f^{q-2})'\right)',
\end{equation}
for a triple $(f,g,h)$ of real functions. If we make the further assumption that $g=vf$ and $h=v^2 f$ for some velocity function $v$, which was already an assumption made in the heuristic study of the monotonic case of the Shepp-Olkin conjecture 
\cite[Section 2]{johnson34} and referred to as a Benamou-Brenier condition, equation~\eqref{eq:uTsallis} is simplified into
\begin{equation*}
u = v'^2 f^q,
\end{equation*}
which is clearly non-negative, and corresponds to \cite[Corollary 2.6]{johnson34}.

\appendix

\section{Proof of technical results}

\subsection{Proof of functional inequality, Lemma \ref{lem:UtotalMonotone}}

\begin{proof}[Proof of Lemma \ref{lem:UtotalMonotone}]
We consider the function 
\begin{equation}
\xi(t) := \alpha U(1-tA)-2 \beta U(1-tB)+\gamma U(1-tC),
\end{equation}
well-defined and smooth for $0 \leq t \leq 1$. Its derivative is given by 
\begin{equation}
\xi'(t) = -A \alpha U'(1-tA) +2  B \beta U'(1-tB) - C \gamma U'(1-tC),
\end{equation}
so Inequality \eqref{eq:UtotalMonotone} can be rewritten $\xi(1) - \xi(0) \geq \xi'(0)$.
Since, by the mean value theorem, $\xi(1) - \xi(0) = \xi'(s)$ for some $0 \leq s \leq 1$,
it is  sufficient to show the convexity of $\xi$, i.e. that $\xi''(t) \geq 0$ for any $0<t<1$. We have:
\begin{eqnarray}
\xi''(t) & = & \alpha A^2 U''(1-tA) - 2\beta B^2 U''(1-t B) + \gamma C^2 U''(1-t C) \\
& \geq & 2 \sqrt{ \alpha \gamma A^2 C^2 U''(1-tA) U''(1-t C) } 
- 2\beta B^2 U''(1-t B)   \label{eq:amgm} \\
& \geq & 2 \beta B^2 \left( \sqrt{ U''(1-tA) U''(1-t C) } 
- U''(1-t B) \right)   \label{eq:step2} \\
& \geq & 2 \beta B^2 \left( \sqrt{ U''(1-tA) U''(1-t C) } 
- U''(1-t (A+C)/2) \right)   \label{eq:step3}
\end{eqnarray}
here (\ref{eq:amgm}) follows by the arithmetic mean-geometric mean inequality,
and (\ref{eq:step2}) follows by the assumptions $\beta \leq \alpha \gamma$ and $B^2 \leq AC$, and
(\ref{eq:step3}) uses the fact that by assumption (iii) $U''(s)$ is decreasing in $s$.

The result follows since we can deduce the positivity of \eqref{eq:step3} using assumption (iv) (the log-convexity of $
U''$)
\end{proof}

\subsection{Proof of Proposition \ref{prop:BernoulliInequality}}

For $\pi_k$ the probability mass function of the sum $T$, we
 note that the quadratic Newton inequality (see for example Niculescu \cite{niculescu}) gives the log-concavity of $\pi$:
\begin{proposition} \label{prop:logConc} 
For any $k \in \{0, \ldots, m-2 \}$ we have $\pi_{k+1}^2 \geq \pi_{k} \pi_{k+2}$.
\end{proposition}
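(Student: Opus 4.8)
The plan is to exploit the fact that the probability generating function of $T$ is a polynomial with only real (non-positive) roots, and then to invoke the classical Newton inequalities for real-rooted polynomials together with the elementary log-concavity of the binomial coefficients. Concretely, I would first write the probability generating function of $T = \sum_{i=1}^m X_i$ as
\[
G(z) \;=\; \ep[z^T] \;=\; \prod_{i=1}^m \bigl( (1-p_i) + p_i z \bigr) \;=\; \sum_{k=0}^m \pi_k z^k .
\]
Each factor is either constant (when $p_i=0$) or linear with the single real root $z = -(1-p_i)/p_i \le 0$; discarding the trivial factors (which only shifts $T$ by a deterministic constant and reindexes), $G$ is a polynomial all of whose roots are real and non-positive. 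In particular the $\pi_k$ are non-negative and the support $\{k : \pi_k>0\}$ is a contiguous interval, so there are no internal zeros to cause trouble.

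Next I would apply the quadratic Newton inequality to the coefficient sequence $(\pi_k)$ of the real-rooted polynomial $G$. Writing $e_k := \pi_k/\binom{m}{k}$ for the normalized coefficients, Newton's inequality (see Niculescu \cite{niculescu}) yields, for each $k \in \{0,\ldots,m-2\}$,
\[
e_{k+1}^2 \;\ge\; e_k\, e_{k+2}, \qquad\text{i.e.}\qquad \frac{\pi_{k+1}^2}{\binom{m}{k+1}^2} \;\ge\; \frac{\pi_k\,\pi_{k+2}}{\binom{m}{k}\binom{m}{k+2}} .
\]
Finally I would combine this with the identity
\[
\frac{\binom{m}{k+1}^2}{\binom{m}{k}\binom{m}{k+2}} \;=\; \frac{(m-k)(k+2)}{(k+1)(m-k-1)} \;\ge\; 1 \qquad (k \le m-2),
\]
which is just log-concavity of the binomial coefficients (the numerator minus denominator equals $m+1>0$), to conclude that $\pi_{k+1}^2 \ge \pi_k\,\pi_{k+2}$, as required.

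I do not expect a serious obstacle: the argument is classical, and the only points needing care are the degenerate cases $p_i \in \{0,1\}$ (handled by discarding the trivial linear factors) and applying Newton's inequality with the correct binomial normalization. An alternative route, avoiding Newton's inequalities, is induction on $m$: the mass function $(1-p,p)$ of a single Bernoulli is trivially log-concave with interval support, and log-concave sequences with no internal zeros are closed under convolution, so the mass function of $T_{m-1}+X_m$ — namely $\pi_k = (1-p_m)\rho_k + p_m \rho_{k-1}$ with $\rho$ the (log-concave) law of $T_{m-1}$ — is again log-concave. Since the statement explicitly cites the quadratic Newton inequality, I would present the real-rootedness proof as the primary one and mention the convolution argument only as a remark.
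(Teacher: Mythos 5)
Your proposal is correct and follows essentially the same route as the paper, which simply cites the quadratic Newton inequality (via Niculescu) applied to the real-rooted generating function $\prod_i((1-p_i)+p_i z)$; you have merely filled in the standard details (the binomial normalization and the log-concavity of $\binom{m}{k}$) that the paper leaves implicit. No further comment is needed.
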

Further properties of Bernoulli sums have been proven by the authors in \cite{johnson34}.
The most interesting for our purposes are the inequalities stated there as $C_1(k) \geq 0$ and $\ol{C}_1(k) \geq 0$, which give that:
\begin{eqnarray}
\left( \pi_{k-1}^2 - \pi_{k-2} \pi_{k} \right) \pi_{k+1} \leq \pi_{k-1} \left( \pi_{k}^2 - \pi_{k-1} \pi_{k+1} \right) \label{eq:c1} \\
\left( \pi_{k+1}^2 - \pi_{k} \pi_{k+2} \right) \pi_{k-1} \leq \pi_{k+1} \left( \pi_{k}^2 - \pi_{k-1} \pi_{k+1}
\right) \label{eq:c1bar} 
\end{eqnarray}
These results have been stated (and proven by induction on the number $n$ of parameters) as \cite[Theorem A2, Corollary
A3]{johnson34}.

Multiplying Equations \eqref{eq:c1} and \eqref{eq:c1bar} together, and rearranging, we simply obtain the positivity of
$$\pi_{k-1} \pi_{k} \pi_{k+1}  \left(
\pi_{k-2} \pi_{k+1}^2 + \pi_{k}^3 + \pi_{k-1}^2 \pi_{k+2} - \pi_{k-2} \pi_{k} \pi_{k+2} - 2 \pi_{k-1} \pi_{k} \pi_{k+1}
\right),$$
and we deduce Proposition \ref{prop:BernoulliInequality} holds.

\subsection{Proof of Proposition \ref{prop:hupper} }

In the monotonic case studied in \cite{johnson34}, Equation~\eqref{eq:NewCondition4} was referred to as Condition 4, and was
verified under the assumption that the $p_i'$ are non-negative. More precisely, 
some involved manipulations (using the definitions of $g_k$ and $h_k$ given in Equations~\eqref{eq:deffgh} 
and \eqref{eq:deffgh2}) allow us to deduce that (see Proposition 6.1  and Equation (73) of \cite{johnson34}):
\begin{lemma} \label{lem:NewCondition4Quadratic}
We can write the term 
\begin{eqnarray}
\lefteqn{
 \left( 2 g_k g_{k+1} f_{k+1} - g_k^2 f_{k+2} - g_{k+1}^2 f_k \right) - h_k (f_{k+1}^2-f_k f_{k+2}) } \nonumber \\
& = & \sum_{i < j}  \left({p_i'}^2 p_j(1-p_j) b_{i,j} + {p_j'}^2 p_i(1-p_i) b_{j,i} + 2 p_i'p_j' p_i(1-p_i)p_j(1-p_j) c_{i,j} \right),
\label{eq:quadratic}
\end{eqnarray}
where the coefficients satisfy
\begin{eqnarray} \label{eq:defcij}
c_{i,j} & := &  - \left(f_{k}^{(i,j)}\right)^3 + 2f_{k-1}^{(i,j)}f_{k}^{(i,j)}f_{k+1}^{(i,j)}-\left(f_{k+1}^{(i,j)}\right)^2 f_{k-2}^{(i,j)}-\left( f_{k-1}^{(i,j)}\right)^2f_{k+2}^{(i,j)}+f_{k-2}^{(i,j)}f_{k}^{(i,j)}f_{k+2}^{(i,j)} \;\;\;\;
\end{eqnarray}
\end{lemma}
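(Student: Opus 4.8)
\emph{Strategy.} The plan is to verify \eqref{eq:quadratic} as a \emph{polynomial identity} in the slopes $(p_1',\ldots,p_n')$, valid irrespective of their signs; this sign-freeness is exactly the feature that lets us dispense with the monotonicity assumption of \cite{johnson34}. Write $\Phi$ for the left-hand side of \eqref{eq:quadratic}. Since each $g_l$ is linear and each $h_l$ is quadratic in the slopes, while the full masses $f_l$ do not involve the slopes at all, $\Phi$ is a homogeneous quadratic form in $(p_1',\ldots,p_n')$. It therefore suffices to compute, for each pair of indices, the coefficient of $p_i'p_j'$ and match it against the corresponding coefficient on the right-hand side of \eqref{eq:quadratic}. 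Throughout I would use the elementary conditioning identities $f_m = (1-p_i) f^{(i)}_m + p_i f^{(i)}_{m-1}$, the analogous single reduction of $f^{(i)}$ by a second index, and the double reduction $f_m = \sum_{a,b\in\{0,1\}} \pr(X_i=a)\pr(X_j=b)\, f^{(i,j)}_{m-a-b}$, which rewrite every full or singly-reduced mass appearing in $\Phi$ in terms of the doubly-reduced masses $\phi_m := f^{(i,j)}_m$ and the parameters $p_i,p_j$.

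\emph{Off-diagonal terms (the main step).} Substituting \eqref{eq:deffgh}--\eqref{eq:deffgh2} into $\Phi$ and reading off the coefficient of $p_i'p_j'$ for $i\neq j$ (the $h_k$ contribution using $f^{(i,j)}=f^{(j,i)}$), then dividing out the factor $2$ common to \eqref{eq:quadratic}, gives
\begin{equation*}
f_{k+1}\bigl(f^{(i)}_k f^{(j)}_{k+1}+f^{(j)}_k f^{(i)}_{k+1}\bigr) - f_{k+2} f^{(i)}_k f^{(j)}_k - f_k f^{(i)}_{k+1} f^{(j)}_{k+1} - f^{(i,j)}_k\bigl(f_{k+1}^2-f_k f_{k+2}\bigr).
\end{equation*}
I would then replace each factor by its reduction to the $\phi_m$, expand, and collect powers of $p_i$ and $p_j$. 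The claim to be verified is that this polynomial in $(p_i,p_j)$ is divisible by $p_i(1-p_i)p_j(1-p_j)$ and that the complementary factor is precisely the cubic $c_{i,j}$ of \eqref{eq:defcij}. I expect this to be the main obstacle: the expression is manifestly of degree two in each of $p_i,p_j$, so divisibility amounts to checking that it vanishes identically when $p_i\in\{0,1\}$ or $p_j\in\{0,1\}$ (i.e.\ when a Bernoulli variable degenerates to a constant), and only after these boundary cancellations does the residual collapse to $c_{i,j}$. Note that $c_{i,j}$ is exactly the negative of the cubic appearing in \eqref{eq:2foldlog} for the measure $\phi=f^{(i,j)}$, which is why Proposition~\ref{prop:BernoulliInequality} is the relevant downstream input.

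\emph{Diagonal terms.} Because $h_k$ is a sum over distinct pairs, it contributes nothing to the coefficient of $(p_i')^2$, which comes entirely from the $g$-products and equals $2 f^{(i)}_k f^{(i)}_{k+1} f_{k+1} - (f^{(i)}_k)^2 f_{k+2} - (f^{(i)}_{k+1})^2 f_k$. Reducing the full masses $f_k,f_{k+1},f_{k+2}$ by $X_i$ and simplifying, one checks this collapses to the clean form
\begin{equation*}
(1-p_i)\, f^{(i)}_k\, \LL\bigl(f^{(i)}_{k+1}\bigr) + p_i\, f^{(i)}_{k+1}\, \LL\bigl(f^{(i)}_k\bigr),
\end{equation*}
where $\LL(f^{(i)}_m)=(f^{(i)}_m)^2-f^{(i)}_{m-1}f^{(i)}_{m+1}\ge 0$ by log-concavity (Proposition~\ref{prop:logConc}). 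It then remains to redistribute this single quantity as $\sum_{j\neq i} p_j(1-p_j)\,b_{i,j}$; I would do so by applying the double reduction in a second index $j$ and extracting the $p_j(1-p_j)$-weighted part, which both defines $b_{i,j}$ and makes the nonnegativity of each diagonal contribution transparent. This step I expect to be routine compared with the off-diagonal one.

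\emph{Assembly.} Summing the matched coefficients over all $i<j$ reproduces \eqref{eq:quadratic} with $c_{i,j}$ as stated. The whole argument is the sign-free counterpart of the manipulation carried out in \cite{johnson34} (Proposition~6.1 and Equation~(73) there), the essential new point being that the quadratic form is treated as an algebraic identity in arbitrary slopes rather than term-by-term under the hypothesis $p_i'\ge 0$.
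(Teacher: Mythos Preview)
Your strategy---treat the left-hand side as a quadratic form in the slopes and match coefficients---is exactly the route the paper takes (by deferring entirely to \cite[Proposition~6.1, Eq.~(73)]{johnson34}), and your off-diagonal computation is correct: the boundary checks at $p_i\in\{0,1\}$ do go through, and the residual factor is indeed the $c_{i,j}$ of \eqref{eq:defcij}. Your reduction of the diagonal coefficient of $(p_i')^2$ to $(1-p_i)f^{(i)}_k\LL(f^{(i)}_{k+1})+p_i f^{(i)}_{k+1}\LL(f^{(i)}_k)$ is also right.

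The gap is in the last move. Your proposed way of manufacturing the $b_{i,j}$---reduce $D_i$ in a further index $j$ and ``extract the $p_j(1-p_j)$-weighted part''---is not well defined: each $f^{(i)}_m$ is linear in $p_j$, so $D_i$ is \emph{cubic} in $p_j$, and a cubic has no canonical $p_j(1-p_j)$-component. Worse, even if you made some choice, summing those pieces over $j\neq i$ would not in general reconstitute $D_i$, because the piece you assign to index $j$ still depends on all the other $p_l$ through $f^{(i,j)}$. So this step is not routine; it is where the specific combinatorics of \cite{johnson34} enters. In that paper the decomposition is obtained by processing $\Phi$ pairwise from the outset (writing every $f$, $f^{(i)}$, $f^{(j)}$ in terms of $f^{(i,j)}$ simultaneously for a fixed pair) rather than by first isolating the diagonal coefficient and then trying to split it. Note also that the lemma as stated here only pins down $c_{i,j}$; the subsequent discriminant argument uses the \emph{particular} $b_{i,j}$ from \cite[Eq.~(72)]{johnson34} together with the bound \eqref{eq:defbij}, so an arbitrary redistribution of the diagonal part would not suffice downstream.
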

Lemma 6.2 of \cite{johnson34} shows that
\begin{equation}
b_{i,j}  \geq   - \frac{1}{2} (p_i (1-p_j) + p_j (1-p_i)) c_{i,j} \label{eq:defbij}
\end{equation}
This result can be verified using the expression for $b_{i,j}$ given in \cite[Equation (72)]{johnson34}, using Equations
(\ref{eq:c1}) and (\ref{eq:c1bar}) and other related cubic inequalities for $\pi$.

We now observe that  $c_{i,j} \leq 0 $ by Proposition \ref{prop:BernoulliInequality},  simply by taking $\pi_k = f_{k}^{(i,j)}$ in Equation
\eqref{eq:2foldlog}. Combining this with \eqref{eq:defbij}, we deduce that $b_{i,j}$ is positive, and that (treated as a 
quadratic in $p'_i$ and $p_j'$, the bracketed term in Equation (\ref{eq:quadratic} has negative discriminant
\begin{eqnarray*}
4 p_i (1-p_i) p_j (1-p_j) c_{i,j}^2 - 4  b_{i,j} b_{j,i} 
& \leq & c_{i,j}^2 \left( 4 p_i (1-p_i) p_j (1-p_j) - (p_i (1-p_j) + p_j(1-p_i))^2 \right) \\
& = & - c_{i,j}^2 (p_i - p_j)^2,
\end{eqnarray*}
meaning that it is positive for all values of $p_i'$ and $p_j'$.

Note that the negativity of $c_{i,j}$ shows that if we fix $|p_i'|$ and $|p_j'|$ then (\ref{eq:quadratic}) is minimized when
$p_i'$ and $p_j'$ have the same sign, justifying the claim that the monotonic case is the worst case.

\subsection{Proof of Corollary \ref{cor:fgh}}

\begin{proof}[Proof of Corollary \ref{cor:fgh}] Proposition \ref{prop:hupper} gives an upper bound on $h_k$: 
\begin{equation}
h_k \left( f_{k+1}^2-f_k f_{k+2} \right) \leq 2 g_k g_{k+1} f_{k+1} - g_k^2 f_{k+2} - g_{k+1}^2 f_k.
\end{equation} In order to prove that 
$f_k h_k \leq g_k^2$, it thus suffices to show that
\begin{equation}
\left(2 g_k g_{k+1} f_{k+1} - g_k^2 f_{k+2} - g_{k+1}^2 f_k \right) f_k \leq \left(f_{k+1}^2-f_k f_{k+2}\right) g_k^2.
\end{equation}
But this equation can be simplified into 
\begin{equation}
\left(f_{k+1} g_k - f_k g_{k+1} \right)^2 \geq 0,
\end{equation}
which is obviously true. 
\end{proof}

\end{document}